\documentclass[12pt,a4paper,reqno]{amsart} 

\usepackage{amsfonts}
\usepackage{amssymb}
\usepackage{amsmath}
\usepackage{amsthm}
\usepackage{cite}
\usepackage{fancyhdr}
\usepackage[colorlinks,linkcolor=blue]{hyperref}

\theoremstyle{plain}
\newtheorem{Thm}{Theorem}[section]
\newtheorem{Pro}[Thm]{Proposition}
\newtheorem{Def}{Definition}[section]
\newtheorem{Rk}[Thm]{Remark}
\newtheorem{Lem}[Thm]{Lemma}

\newtheorem{Ex}{Example}[section]

\numberwithin{equation}{section}
\begin{document}
\title[fully nonlinear elliptic equations]{On the exterior Dirichlet problem for a class of fully nonlinear elliptic equations} 

\author[T.Y. Jiang]{Tangyu Jiang}
\address[T.Y. Jiang]{School of Mathematical Sciences\\
Beijing Normal University \\
	Beijing, 100875\\
	P.R. China}
\email{{\tt 201821130025@mail.bnu.edu.cn}}

\author[H.G. Li]{Haigang Li}
\address[H.G. Li]{School of Mathematical Sciences\\
Beijing Normal University \\
	Beijing, 100875\\
	P.R. China}
\email{{\tt hgli@bnu.edu.cn}}

\author[X.L. Li]{Xiaoliang Li$^*$}
\address[X.L. Li]{School of Mathematical Sciences\\
Beijing Normal University \\
	Beijing, 100875\\
	P.R. China}
\email{{\tt rubiklixiaoliang@163.com}}

\thanks{H.G. Li was supported by NSFC (11631002, 11971061).}

\thanks{$^*$ Corresponding author: Xiaoliang Li}

\subjclass[2010]{35J60, 35J25, 35D40,  35B40}

\keywords{Fully nonlinear equation; Exterior Dirichlet problem; Existence and uniqueness; Prescribed asymptotic behavior; Perron's method}
\begin{abstract}
In this paper, we mainly establish the existence and uniqueness theorem for solutions of the exterior Dirichlet problem for a class of fully nonlinear second-order elliptic equations related to the eigenvalues of the Hessian, with prescribed generalized symmetric asymptotic behavior at infinity. Moreover, we give some new results for the Hessian equations, Hessian quotient equations and the special Lagrangian equations, which have been studied previously. 
\end{abstract}
\maketitle
\section{Introduction}\label{sec:intro}
In this paper, we will study the exterior Dirichlet problem for the following fully nonlinear, second-order partial differential equations of the form,
\begin{equation}\label{eq:pro}
\left\{
\begin{array}{ll}
f(\lambda(D^2u))=1, & \mathrm{in}\ \mathbb{R}^n\setminus \overline{D},\\
u=\varphi, & \mathrm{on}\ \partial D,
\end{array}
\right.
\end{equation}
where $D$ is a bounded open set in $\mathbb{R}^n$ $(n\geq 3)$,  $\lambda(D^2 u)=\{\lambda_i(D^2u)\}_{i=1}^n$ denotes the eigenvalues of the Hessian matrix $D^2u$, and $f$ is a smooth symmetric function defined in an open convex cone $\Gamma\subsetneqq \mathbb{R}^n$, with vertex at the origin and a location such that
$$\{\lambda\in\mathbb{R}^n|\lambda_i> 0, i=1,\cdots, n\}\subset\Gamma\subset \{\lambda\in\mathbb{R}^n|\sum_{i=1}^n\lambda_i>0\}.$$ As typical examples embraced in \cite{Caffarelli1984, Caffarelli1985,Trudinger1995}, such $f$ may include the $k$-th elementary symmetric function
\begin{equation}\label{eq:k-sigma}
\sigma_k(\lambda):=\sum_{1\leq i_1<\cdots<i_k\leq n}\lambda_{i_1}\cdots\lambda_{i_k},
\end{equation}
the quotient function $(1\leq l<k\leq n)$
\begin{equation}\label{eq:quotient}
S_{k,l}(\lambda):=\frac{\sigma_k(\lambda)}{\sigma_l(\lambda)},
\end{equation}
and the special Lagrangian operator
\begin{equation}\label{eq:Lag}
\sum_{i=1}^n\arctan \lambda_i,
\end{equation}
with which \eqref{eq:pro} corresponds to the $k$-Hessian equations (particularly, the Poisson equation $\Delta u=1$ if $k=1$ and Monge-Amp\`ere equation $\det(D^2u)=1$ if $k=n$), the Hessian quotient equations and the special Lagrangian equations, respectively.

For equation \eqref{eq:pro}, a special class of fully nonlinear equations of Hessian type, there has been many
excellent results regarding the existence of solutions to various interior boundary value problems. Caffarelli, Nirenberg and Spruck \cite{Caffarelli1984, Caffarelli1985} and Trudinger \cite{Trudinger1995} solved the classical Dirichlet problem by the method of continuity, under the structural conditions that $f$ is a concave function and satisfies
\begin{equation}\label{eq:increase}
\frac{\partial f}{\partial \lambda_i}>0 \  in\ \Gamma, \ i=1,\cdots,n,
\end{equation} 
where the equation is elliptic and the cases \eqref{eq:k-sigma}-\eqref{eq:Lag} are included. Then in the setting of Riemannian manifolds, Guan \cite{Guan1999} treated a more general elliptic equation and stated that a smooth solution of the Dirichlet problem exists if an associated subsolution exists, without any geometric restrictions to the boundary. For more related studies, we refer the reader to \cite{Trudinger1990,Urbas1990} for the Dirichlet problem and \cite{Lions-Trudinger-Urbas-1986,Trudinger1987,Ma-Qiu-2019,Chen-Ma-Wei-2019,Urbas1995, Liberman2013} for the Neumann problem and the nonlinear oblique boundary value problem, and the references therein.

Turning to the counterpart, the study of the exterior problem \eqref{eq:pro} in an unbounded domain also received increasing attention in recent years. As we know, a celebrated J\"orgens-Calabi-Pogorelov theorem \cite{Jorgens1954, Calabi1958, Pogorelov1972, Cheng-Yau-1986, Caffarelli1995,Jost2001} states that any classical convex solution of 
\begin{equation}\label{eq:MA}
\det(D^2u)=1, 
\end{equation}
in $\mathbb{R}^n (n\geq 3)$ must be a quadratic polynomial. On exterior domains, Caffarelli and Li \cite{Caffarelli-Li-2003} showed that any convex viscosity
solution of \eqref{eq:MA} will approach uniformly, with order of the fundamental solution of the Laplacian, to a quadratic polynomial at infinity, i.e., there holds
\begin{equation}
\label{eq:C-Li}
\limsup_{|x|\to\infty}\left(|x|^{n-2}\left|u(x)-\left(\frac12x^TAx+b\cdot x+c\right)\right|\right)<\infty,
\end{equation}
where $A$ is some symmetric positive definite matrix with $\det A=1$. Moreover, for every such matrix $A$, 
Li and Lu \cite{Li-Lu-2018} completed the
characterization of solvability to the exterior Dirichlet problem for  \eqref{eq:MA} in terms of the prescribed asymptotic behavior \eqref{eq:C-Li}. In the same spirit, the existence theorem of exterior problem \eqref{eq:pro} has established recently by Perron's method for $k$-Hessian equations $(2\leq k\leq n)$ in \cite{Bao-Li-Li-2014}, for Hessian quotient equations in \cite{Li-Li-Zhao-2019, Li-Li-2018}, and for the special Lagrangian equations in \cite{Bao-Li-2013,Li2019}, respectively, with the similar settings as \eqref{eq:C-Li} where the matrices $A$ are choosed to adapt the corresponding equation. But for the general equation \eqref{eq:pro} of abstract form, few results are known on exterior domains. In \cite{Li-Bao-2014}, assuming \eqref{eq:increase} and 
that there is a positive constant $a^*$ such that
\begin{equation}\label{eq:c}
f(a^*(1,1,\cdots,1))=1,
\end{equation}
Li and Bao proved
the existence of viscosity solutions of problem \eqref{eq:pro} prescribed at infinity by \eqref{eq:C-Li} where the matrix $A$ is fixed to be $a^*I$. 
Except for these existence results, we refer to \cite{Bao-Li-Zhang-2015,Li-Li-Yuan-2019} for more information about Hessian type equations outside a bounded domain, where the Liouville type results were exploited.

This paper aims to establish the existence and uniqueness theorem for the exterior Dirichlet problem \eqref{eq:pro} with some prescribed asymptotic behavior at infinity. We only assume \eqref{eq:increase}, \eqref{eq:c} and another boundary condition, without requiring the concavity assumption. We would like to point out that here our hypotheses on function $f$ are so weak that it not only can cover the situation considered in \cite{Caffarelli1984,Caffarelli1985,Trudinger1995}   but also allows more examples of a fully nonlinear equation. By introducing the concept of generalized symmetric (an extended version of radially symmetric to be specified later) subsolutions and then applying Perron's method, it is proved here that not just for $a^* I$ introduced in \eqref{eq:c} but for more positive definite matrices satisfying equation \eqref{eq:pro}, problem \eqref{eq:pro} with prescribed asymptotics of the type similar to \eqref{eq:C-Li} admits a unique viscosity solution, thus breaking the constraint of radially symmetric on asymptotic functions imposed in \cite{Li-Bao-2014} and extending its result to be valid for  more general prescribed asymptotic behavior condition. This also extends those existence results obtained in \cite{Bao-Li-Li-2014, Li-Li-Zhao-2019,Li-Li-2018,Li2019} particularly for cases \eqref{eq:k-sigma}-\eqref{eq:Lag} to the general $f$ fulfilling \eqref{eq:increase}. Moreover, since we use the new technique presently to construct subsolutions, which analyzes the corresponding implicit ordinary differential equation and does not rely on the homogeneity of $f$, our theorem exhibits some new related results for those particular equations. Before giving the precise statement, we here introduce some notations and definitions.

First, we say that a function $u\in C^2(\mathbb{R}^n\setminus\overline{D})$ is admissible if $\lambda(D^2u)\in\overline{\Gamma}$. Then we use $\mathrm{USC}(\Omega)$ and $\mathrm{LSC}(\Omega)$ to respectively denote the set of upper and lower semicontinuous real valued functions on $\Omega\subset\mathbb{R}^n$. The definition of viscosity solution to \eqref{eq:pro} follows from \cite{Ishii1992}.
\begin{Def}
A function $u\in \mathrm{USC}(\mathbb{R}^n\setminus\overline{D})$ $(\mathrm{LSC}(\mathbb{R}^n\setminus\overline{D}))$ is said to be a viscosity subsolution (supersolution) of equation \eqref{eq:pro}, if for any admissible function $\psi$ and point $\bar{x}\in \mathbb{R}^n\setminus\overline{D}$  satisfying $$\psi(\bar{x})=u(\bar{x})\quad\mathrm{and}\quad \psi\geq(\leq) u\ on\ \mathbb{R}^n\setminus\overline{D}$$ we have $$f(D^2\psi(\bar{x}))\geq(\leq)1.$$
A function $u\in C^0(\mathbb{R}^n\setminus\overline{D})$ is said to be a viscosity solution of equation \eqref{eq:pro} if it is both a viscosity subsolution and supersolution.
\end{Def}

\begin{Def}
Let $\varphi\in C^0(\partial D)$. A function $u \in \mathrm{USC}(\mathbb{R}^n\setminus D)$ $(u \in \mathrm{LSC}(\mathbb{R}^n\setminus D))$ is said to be a viscosity subsolution (supersolution) of problem \eqref{eq:pro}, if $u$ is a viscosity subsolution (supersolution) of equation \eqref{eq:pro} and $u\leq (\geq)\varphi$ on $\partial D$. A function $u\in C^0(\mathbb{R}^n\setminus D)$ is said to be a viscosity solution of problem \eqref{eq:pro} if it is both a subsolution and a supersolution.
\end{Def}

Following the notations in \cite{Bao-Li-Li-2014,Li-Li-Zhao-2019}, we define the concept of generalized symmetric (abbreviated to G-Sym) solution of \eqref{eq:pro} as follows:
\begin{Def}
For a symmetric matrix $A$, we call $u$ a G-Sym function with respect to $A$ if it is a function of $s=\frac12x^TAx$, $x\in\mathbb{R}^n$, that is, $$u(x)=u(s):=u(\frac12x^TAx).$$ If $u$ is a solution of equation \eqref{eq:pro} and is also a G-Sym function with respect to some symmetric matrix, we say that $u$ is a G-Sym solution of \eqref{eq:pro}.
\end{Def}
 
Next, we assume that the symmetric function $f$ in \eqref{eq:pro} satisfies
\begin{equation}\label{eq:boundary}
\limsup_{\lambda\to\lambda_0}f(\lambda)<1, \quad \text{for every}\ \lambda_0\in\partial\Gamma. 
\end{equation}
Then for a symmetric positive definite matrix $A$, let $$a:=\lambda(A)=(a_1,a_2,\cdots,a_n)$$ denotes its eigenvalues and define 
$$\hat{a}=\max_{1\leq i\leq n}a_i,\quad\hat{f}_\lambda(a)=\max_{1\leq i\leq n}\frac{\partial f}{\partial\lambda_i}(a).$$ 
Set $$\mathcal{A}:=\left\{A\in S^+(n):f(a)=1,\  \frac{\nabla f(a)\cdot a}{2\hat{a}\hat{f}_\lambda(a)}>1\right\}$$
where $S^+(n)$ is the set containing all of real $n\times n$ symmetric positive definite matrix. It is easy to verify that $a^*I\in\mathcal{A}$ for $a^*$ defined by \eqref{eq:c}. 
 
 Our main result is:
\begin{Thm}\label{thm:main}
Let $D$ be a smooth, bounded, strictly convex open subset of $\mathbb{R}^n$, $n\geq 3$ and let $\varphi\in C^2(\partial D)$. Assume that $f$ satisfies \eqref{eq:increase}, \eqref{eq:c} and \eqref{eq:boundary}. Then for any given $A\in\mathcal{A}$ and $b\in\mathbb{R}^n$, and for each $\delta>0$ such that 
\begin{equation}\label{eq:alpha-d}
\alpha_\delta:=\frac{\nabla f(a)\cdot a}{(2\hat{a}+\delta)\hat{f}_\lambda(a)}>1,
\end{equation}
 there exists a constant $c_*$ depending on $n,b,A,D,f,\delta$ and $\|\varphi\|_{C^2(\partial D)}$, such that for every $c>c_*$ there exists a unique viscosity solution $u\in C^0(\mathbb{R}^n\setminus D)$ of \eqref{eq:pro} satisfying
\begin{equation}\label{eq:asym-s}
\limsup_{|x|\to\infty}\left(|x|^{2\alpha_\delta-2}\Big{|}u(x)-(\frac12x^TAx+b\cdot x+c)\Big{|}\right)<\infty.
\end{equation}  

In particular, if $A=a^*I$ with $a^*$ defined in \eqref{eq:c}, or the $A\in\mathcal{A}$ with the property that $\frac{\partial f}{\partial\lambda_{i_0}}(a)>\max_{i\neq i_0}\frac{\partial f}{\partial\lambda_i}(a)$ for some $1\leq i_0\leq n$, then the above assertion holds further for $\delta\geq 0$ whenever $\alpha_\delta>1$.
\end{Thm}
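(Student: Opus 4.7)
The plan is to apply Perron's method with a suitable pair of sub- and super-solutions. The supersolution is immediate: the quadratic polynomial $V(x):=\frac{1}{2}x^TAx+b\cdot x+c$ has $\lambda(D^2V)=a$, so $f(\lambda(D^2V))=f(a)=1$ on all of $\mathbb{R}^n$; taking $c$ larger than some $c_*$ depending on $A,b,D,\|\varphi\|_{C^0(\partial D)}$ ensures $V\geq \varphi$ on $\partial D$, so $V$ is a viscosity supersolution of \eqref{eq:pro}. The real work is to produce a G-Sym viscosity subsolution $\underline{u}$ with respect to $A$ satisfying $\underline{u}\leq \varphi$ on $\partial D$ and $\underline{u}(x)=\frac{1}{2}x^TAx+b\cdot x+c+O(|x|^{2-2\alpha_\delta})$ at infinity; Perron's method then delivers a viscosity solution wedged between $\underline{u}$ and $V$, with the asymptotic \eqref{eq:asym-s} inherited from the sandwich, and a standard comparison argument gives uniqueness.

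For the subsolution, I would translate so that $0\in D$ and seek $\underline{u}$ in the form $\underline{u}(x)=w(s)+b\cdot x+c'$ with $s:=\frac{1}{2}x^TAx$, where $w$ is determined from an ODE. The linear term has no effect on the Hessian, and a direct computation gives $D^2\underline{u}(x)=w'(s)A+w''(s)(Ax)(Ax)^T$, a rank-one (positive semidefinite when $w''\geq 0$) perturbation of $w'(s)A$. Working in an eigenbasis of $A$, expanding $f$ to first order about the base point $a$ (where $f(a)=1$), and bounding the eigenvalues of the rank-one perturbation by Rayleigh quotients, the inequality $f(\lambda(D^2\underline{u}))\geq 1$ reduces to a scalar second-order ordinary differential inequality for $w$ whose principal linear part has indicial exponent exactly $\alpha_\delta$. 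Under $\alpha_\delta>1$, this ODI admits a smooth solution $w$ on $[s_0,\infty)$ for any prescribed $s_0>0$, with $w(s)-s\to\mathrm{const}$ at rate $O(s^{1-\alpha_\delta})$ and with initial value $w(s_0)$ freely adjustable. Choosing $s_0$ small enough that the ellipsoid $\{s\leq s_0\}$ lies inside $D$, together with $w(s_0)$ sufficiently negative, yields $\underline{u}\leq\varphi$ on $\partial D$; tuning $c'$ together with $\lim_{s\to\infty}(w(s)-s)$ then matches the asymptotic constant $c$ so that \eqref{eq:asym-s} holds. The $\delta>0$ buffer reflects the fact that, unless $A$ is a scalar multiple of the identity, the perturbation direction $Ax$ is not aligned with an eigenvector of $A$, so the Rayleigh bound is not tight; this slack disappears in the radial case $A=a^*I$ and is removed by a sharper coordinate-wise estimate when one partial derivative of $f$ at $a$ strictly dominates, giving the ``in particular'' statements with $\delta\geq 0$.

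With $\underline{u}$ in hand, $u(x):=\sup\{v(x):v\text{ is a viscosity subsolution of \eqref{eq:pro} with }\underline{u}\leq v\leq V\}$ is a viscosity solution by the standard Perron machinery (condition \eqref{eq:boundary} keeps test-function eigenvalues away from $\partial\Gamma$ along the envelope), it is continuous up to $\partial D$ with $u|_{\partial D}=\varphi$ because $\underline{u}$ and $V$ serve as explicit matching barriers, and it satisfies \eqref{eq:asym-s} through the sandwich $\underline{u}\leq u\leq V$. Uniqueness then follows from the comparison principle: any two solutions $u_1,u_2$ fulfilling \eqref{eq:asym-s} have $u_1-u_2\to 0$ at infinity and $u_1=u_2$ on $\partial D$, so viscosity comparison (ellipticity from \eqref{eq:increase}) applied on large balls and passed to the limit forces $u_1\equiv u_2$. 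The main obstacle throughout is the subsolution step — namely, extracting a tractable scalar ODI from $f(\lambda(D^2\underline{u}))\geq 1$ whose solvability and decay exponent are governed precisely by $\alpha_\delta$, while retaining enough initial-data freedom to undercut $\varphi$ on $\partial D$ — since this is exactly where the generality of $f$ (no concavity, no homogeneity) and of $A$ (not necessarily a scalar multiple of the identity) must be accommodated without collapsing to the radially symmetric setting of \cite{Li-Bao-2014}.
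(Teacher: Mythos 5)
Your overall skeleton does match the paper's (a G-Sym subsolution governed by a scalar ODE with decay exponent $\alpha_\delta$, the quadratic $\frac12x^TAx+b\cdot x+c$ as supersolution, Perron's method, comparison for uniqueness), but there is a genuine gap at the boundary. You take $\underline{u}\leq\varphi$ and $V\geq\varphi$ on $\partial D$ and then claim the Perron envelope satisfies $u|_{\partial D}=\varphi$ ``because $\underline{u}$ and $V$ serve as explicit matching barriers.'' They do not: you pushed $\underline{u}$ strictly below $\varphi$ and lifted $V$ strictly above it, so neither touches $\varphi$ at any boundary point, and the supremum of subsolutions squeezed between them has no reason to attain the Dirichlet data. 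This is exactly where the hypotheses ``$D$ strictly convex'' and ``$\varphi\in C^2(\partial D)$'' enter, which your argument never uses. The paper's proof constructs, for each $\xi\in\partial D$, a quadratic \emph{solution} $w_\xi$ with $w_\xi(\xi)=\varphi(\xi)$ and $w_\xi<\varphi$ on $\partial D\setminus\{\xi\}$ (Lemma \ref{lem:w-xi}), sets $\underline{w}=\max_\xi w_\xi$ so that $\underline{w}=\varphi$ exactly on $\partial D$, and glues $\underline{w}$ with the G-Sym subsolution $\omega_{\alpha(c)}$ by a maximum across an ellipsoidal annulus $E(\hat s)\setminus E(\bar s)$; only with this family of touching lower barriers does the Perron solution attain $\varphi$ continuously.

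Two further points in your subsolution step need repair. First, the reduction of $f(\lambda(D^2\underline{u}))\geq1$ to a scalar ODE via the rank-one (Weyl/Rayleigh) eigenvalue bounds and the gradient of $f$ evaluated near $a$ is only valid for $s$ large (this is the $\bar s$ in Lemma \ref{lem:ff} and Proposition \ref{pro:sub-f}), since it requires $u'$ close to $1$ and $su''$ small; you cannot run the G-Sym function as a subsolution all the way down to an ellipsoid $\{s\leq s_0\}\subset D$, and the paper covers the region near $D$ with the exact quadratic barriers instead. Second, to reach \emph{every} $c>c_*$ you must make the gap between the subsolution's values near $\partial D$ and its asymptotic additive constant arbitrarily large; adding a constant ($c'$ or the initial value $w(s_0)$) shifts both equally, so the needed freedom lies in the initial slope, and the paper proves $\mu(c_2)\to\infty$ as $c_2\to\infty$ (Proposition \ref{pro:sub}, using the implicit function $g$ from \eqref{eq:boundary} and the cone bounds on $g$), then selects $\alpha(c)$ with $\mu_{\bar s}(\alpha(c))=c$. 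Your sketch asserts this adjustability without a mechanism, and note also that the sandwich $\underline{u}\leq u\leq V$ yielding \eqref{eq:asym-s} requires the subsolution to approach the quadratic from below, i.e. $w'\geq1$ with $w''\leq0$, not $w''\geq0$.
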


\begin{Rk}\label{rk:cover}
When $A=a^*I$, Theorem \ref{thm:main} holds for any $0\leq\delta<(n-2)a^*$, which covers the result in \cite{Li-Bao-2014} where the particular case $\delta=0$ was studied (note $\alpha_0=\frac{n}{2}$ and \eqref{eq:asym-s} becomes \eqref{eq:C-Li} in this case). 
\end{Rk}

\begin{Rk}\label{rk:particular}
When $f$ is in the special forms \eqref{eq:k-sigma}-\eqref{eq:Lag} respectively, Theorem \ref{thm:main} can apply to the $k$-Hessian equations $(1\leq k\leq n)$, Hessian quotient equations and the special Lagrangian equations. Namely, those typical functions fulfill the conditions \eqref{eq:increase}, \eqref{eq:c} and \eqref{eq:boundary}, and for a rigorous verification we refer to \cite{Caffarelli1985} for \eqref{eq:k-sigma} and \eqref{eq:Lag}, and \cite{Trudinger1995} for \eqref{eq:quotient}. Notice that the solvability of problem \eqref{eq:pro} for these particular equations has studied separately in \cite{Bao-Li-Li-2014,Li-Li-Zhao-2019,Li-Li-2018, Caffarelli-Li-2003, Li2019}, while we obtain the new results for them in a different setting about prescribed asymptotic behavior at infinity,  complementing the previous related works. We would like to illustrate that more precisely in Subsection \ref{subsec:3-2} for each special equation by providing some specific corresponding examples of elements in $\mathcal{A}$.



\end{Rk}

The proof of Theorem \ref{thm:main} consists of constructing a family of G-Sym subsolutions of equation \eqref{eq:pro} with proper uniformly asymptotic behavior at infinity and carrying out the Perron's process. Unlike the kinds of literature \cite{Bao-Li-Li-2014,Li-Li-Zhao-2019,Li-Li-2018, Li2019,Caffarelli-Li-2003} where certain operators \eqref{eq:k-sigma}-\eqref{eq:Lag} would accurately act on G-Sym functions due to the homogeneity of $\sigma_k$ operators, or the paper \cite{Li-Bao-2014} seeking the radial solution to \eqref{eq:pro} whose eigenvalues of the Hessian matrix can be computed explicitly, one cannot directly represent the eigenvalues of the Hessian to the general G-Sym function in a precise way. This causes the major difficulty of searching for G-Sym subsolutions of \eqref{eq:pro} with $f$ being of abstract form. We tackle it by invoking a crucial estimate for eigenvalues of a symmetric matrix (see Theorem \ref{thm:Weyl}) and introducing a suitable nonnegative parameter $\delta$, which allows us to compare $f$ between imprecise $\lambda(D^2u)$ and a precise point in $\Gamma$ related to the $\delta$. 
Then we obtain the desired G-Sym subsolutions of \eqref{eq:pro} with respect to $A\in\mathcal{A}$ by solving an implicit ordinary differential equation. Our approach is purely inhomogeneous, and we take inspiration from \cite{Li-Bao-2014} where the radial case was studied. However, in the G-Sym case, some different technical difficulties need to be dealt with due to the non-equivalence of eigenvalues of $A$. Additionally, it should be noticed that here we need, as handled in \cite{Bao-Li-Li-2014,Li-Li-Zhao-2019}, to prove Theorem \ref{thm:main} when $A$ is diagonal and $b=0$ by the invariance of $f$ under the action of Euclidean group; but we cannot further assume $A=I$ unless $f$ is conformal invariant (the Monge-Amp\`ere equation for example).

The remainder of this paper is organized as follows. In Section \ref{sec:2}, we will construct appropriate subsolutions of equation \eqref{eq:pro}, which are G-Sym functions and possess certain uniformly asymptotic behavior at infinity. Then we prove Theorem \ref{thm:main} by Perron's method in Subsection \ref{subsec:3-1}. At last, in Subsection \ref{subsec:3-2}, we state some applications of Theorem \ref{thm:main} for problem \ref{eq:pro} associated with specific forms \eqref{eq:k-sigma}-\eqref{eq:Lag}, and further present respective examples to compare our result  and those built in previous works \cite{Bao-Li-Li-2014,Li-Li-Zhao-2019,Li-Li-2018, Caffarelli-Li-2003, Li2019}.  


Throughout the rest sections, we shall always use the symbol $$\lambda(A):=(\lambda_1(A),\lambda_2(A),\cdots,\lambda_n(A))$$ with the order $\lambda_1(A)\leq\lambda_2(A)\leq\cdots\leq\lambda_n(A)$ to denote the eigenvalues of a real $n\times n$ symmetric matrix $A$.



\section{Generalized symmetric subsolutions}\label{sec:2}
This section is devoted to the construction of G-Sym subsolutions of equation \eqref{eq:pro}, equipped with  \eqref{eq:increase}, \eqref{eq:c} and \eqref{eq:boundary}. It is an essential step to apply the Perron's arguments to prove Theorem \ref{thm:main} in the next section. Via a delicate process involving estimating the eigenvalues of the Hessian for G-Sym functions and analyzing an implicit ODE, we ultimately obtain G-Sym subsolutions with respect to the diagonal matrix of the set $\mathcal{A}$ and determine their asymptotic behavior at infinity, see Proposition \ref{pro:sub-f}.

We start with some preparations. For $A =\text{diag}(a_1, a_2,\cdots, a_n)$ with $0<a_1\leq a_2\leq\cdots\leq a_n$, suppose throughout this section that $u:=u(s)$ is a G-Sym function with respect to $A$, where $s=\frac12x^TAx=\frac12\sum_{i=1}^na_ix_i^2$, $x\in\mathbb{R}^n$. Then 
\begin{equation}\label{eq:D2u}
(D^2u)_{ij}=a_i\delta_{ij}u'+a_ia_jx_ix_ju''.\end{equation}
To make $u$ be a subsolution of \eqref{eq:pro}, i.e. $f(\lambda(D^2u))\geq 1$, we need acquire the information of the eigenvalues $\lambda_i(D^2u)$, $i=1,\cdots,n$. Since the uncertainty of $a_i$, it seems impossible to directly compute $\lambda_i(D^2u)$. So we turn to seek an available estimate for them. Indeed, there holds:
\begin{Lem}\label{lem:D2u}
Assume $u'(s)>0$ and $u''(s)\leq 0$ for $s>0$. Then for $i=1,\cdots,n$, 
\begin{equation*}
a_iu'(s)+\sum_{j=1}^na_j^2x_j^2u''(s)\leq \lambda_i(D^2u(s))\leq a_iu'(s).
\end{equation*}

In particular, when $a_1=a_2=\cdots=a_n=a$,  
\begin{gather*}
\lambda_1(D^2u(s))=au'(s)+2asu''(s); \\
 \lambda_i(D^2u(s))=au'(s), \ i=2,\cdots,n.
\end{gather*}
\end{Lem}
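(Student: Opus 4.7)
The plan is to express $D^2u$ as a sum of two symmetric matrices whose spectra can be read off directly, and then extract the two-sided bound from a standard eigenvalue perturbation inequality. First I would rewrite \eqref{eq:D2u} in matrix form as
$$D^2u(s)=u'(s)A+u''(s)(Ax)(Ax)^T,$$
noting that $(Ax)_i=a_ix_i$ so that $((Ax)(Ax)^T)_{ij}=a_ia_jx_ix_j$ matches the second term in \eqref{eq:D2u}. The summand $u'(s)A$ is diagonal with entries $u'(s)a_1\leq u'(s)a_2\leq\cdots\leq u'(s)a_n$ (by $u'(s)>0$ and the chosen ordering of the $a_i$), so its eigenvalues agree, in order, with $u'(s)a_i$. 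The summand $u''(s)(Ax)(Ax)^T$ is a scalar multiple of a rank-one positive semidefinite matrix, whose unique nonzero eigenvalue is $|Ax|^2=\sum_j a_j^2x_j^2$; since $u''(s)\leq 0$, the ordered eigenvalues of this summand are $u''(s)\sum_j a_j^2x_j^2\leq 0\leq\cdots\leq 0$.

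Next I would apply Weyl's inequality for real symmetric matrices, which the paper will use again in Section~\ref{sec:2}: for $M=B+C$, one has $\lambda_i(B)+\lambda_1(C)\leq\lambda_i(M)\leq\lambda_i(B)+\lambda_n(C)$ in the increasing ordering. Taking $B=u'(s)A$ and $C=u''(s)(Ax)(Ax)^T$ and inserting the spectra computed above immediately yields
$$a_iu'(s)+u''(s)\sum_{j=1}^n a_j^2 x_j^2\leq\lambda_i(D^2u(s))\leq a_iu'(s),$$
which is the claimed estimate.

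For the particular case $a_1=\cdots=a_n=a$, no perturbation argument is actually needed. Here $A=aI$ and $s=\tfrac{a}{2}|x|^2$, so $D^2u(s)=au'(s)I+a^2u''(s)\,xx^T$. The rank-one matrix $xx^T$ has nonzero eigenvalue $|x|^2=2s/a$ in the direction of $x$ and eigenvalue $0$ on $x^\perp$, so the eigenvalues of $D^2u(s)$ are $au'(s)+a^2u''(s)|x|^2=au'(s)+2asu''(s)$ (the smallest, since $u''\leq 0$) together with $au'(s)$ of multiplicity $n-1$, exactly as stated.

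I do not anticipate any serious obstacle: the only points requiring care are (i) recognising the second term in \eqref{eq:D2u} as the outer product $(Ax)(Ax)^T$ so that it is manifestly rank-one, and (ii) matching the increasing ordering of eigenvalues in Weyl's inequality with the convention fixed at the end of the introduction. The degenerate point $x=0$ (equivalently $s=0$) is excluded by the hypothesis $s>0$, but even there both bounds collapse to $a_iu'(0)$ and the claim holds trivially.
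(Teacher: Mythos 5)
Your argument for the two-sided bound is essentially the paper's own proof: the same splitting $D^2u=u'(s)A+u''(s)(Ax)(Ax)^T$ (the paper writes the second summand entrywise as $(a_ia_jx_ix_ju'')$ rather than as an outer product, but it is the same decomposition), the same identification of its spectrum as $\bigl(u''\sum_j a_j^2x_j^2,0,\dots,0\bigr)$, and the same two instances of Weyl's inequality ($j=0$ in the upper bound \eqref{eq:Wue} and $j=1$ in the lower bound \eqref{eq:Wle}), so that part checks out completely. Where you genuinely diverge is the case $a_1=\cdots=a_n=a$: you compute the spectrum of $au'I+a^2u''\,xx^T$ outright from the rank-one structure, getting $au'+2asu''$ on the span of $x$ and $au'$ on $x^\perp$, whereas the paper applies Weyl a second time (with $j=2$ in \eqref{eq:Wle}) to pin down $\lambda_i=au'$ for $i\ge2$ and then recovers $\lambda_1$ from the trace identity \eqref{eq:trace}. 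Your route is more elementary and slightly more informative (it exhibits the eigenvectors and does not need the trace), while the paper's keeps the whole lemma inside the single Weyl framework it has just set up; both are correct, and your observation that $u''\le0$ forces the rank-one eigenvalue to be the smallest is exactly the point needed to match the ordering convention fixed at the end of the introduction.
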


\begin{proof}
In \eqref{eq:D2u}, denote $$D^2u:=A_1+A_2$$ with $$A_1=\text{diag}(a_1u',a_2u',\cdots,a_nu')\quad \text{and}\quad A_2=(a_ia_jx_ix_ju'').$$ By a straightforward computation, one has
$$\lambda_1(A_2)=\sum_{j=1}^n a_j^2 x_j^2u'';\quad \lambda_{i}(A_2)=0,\ i=2,\cdots,n.$$
We next use $\lambda(A_1)$ and $\lambda(A_2)$ to estimate $\lambda(A_1+A_2)$, by employing a key assertion, stated below as a simple version, whose complete content and proof can be found in \cite{Horn1985}. 
\begin{Thm}[Weyl \cite{Horn1985}]\label{thm:Weyl}
Let $A$ and $B$ be real $n\times n$ symmetric matrices. Then
\begin{equation}\label{eq:Wue}
\lambda_i(A+B)\leq\lambda_{i+j}(A)+\lambda_{n-j}(B), \quad j=0,1,\cdots,n-i
\end{equation}
for each $i=1,\cdots,n$. Also,
\begin{equation}\label{eq:Wle}
\lambda_{i-j+1}(A)+\lambda_j(B)\leq\lambda_i(A+B),\quad j=1,\cdots,i
\end{equation}
for each $i=1,\cdots,n$.
\end{Thm}
Now, applying Theorem \ref{thm:Weyl} to $A_1$ and $A_2$, introduced above, and  choosing $j=0$ in \eqref{eq:Wue}, and $j=1$ in \eqref{eq:Wle}, we get
\begin{equation*}
\lambda_i(A_1)+\lambda_1(A_2)\leq\lambda_{i}(A_1+A_2)\leq\lambda_i(A_1),
\end{equation*}
that is, 
\begin{equation}\label{eq:con-proof-lem}
a_iu'+\sum_{j=1}^na_j^2x_j^2u''\leq \lambda_i(D^2u)\leq a_iu',\quad i=1,\cdots,n.
\end{equation}

 In particular, if $a_1=\cdots=a_n=a$, choosing $i=1$ in \eqref{eq:con-proof-lem} yields 
\begin{equation*}
 au'+\sum_{j=1}^na^2x_j^2u''\leq \lambda_1(D^2u)\leq au'.
\end{equation*} 
Then choosing $j=0$ in \eqref{eq:Wue}, and $j=2$ in \eqref{eq:Wle}, we get
\begin{equation}\label{eq:D2u-a}
au'(s)\leq \lambda_i(D^2u)\leq au'(s),\quad i\geq2.
\end{equation}
Note from \eqref{eq:D2u} that 
\begin{equation}\label{eq:trace}
\sum_{j=1}^n\lambda_j(D^2u)=\sum_{j=1}^na_ju'+\sum_{j=1}^na_j^2x_j^2u''.
\end{equation}
This, together with \eqref{eq:D2u-a}, forces that $$\lambda_1(D^2u)=au'(s)+2asu''(s),\quad \lambda_i(D^2u)=au', \quad i\geq 2.$$
\end{proof}

 By virtue of Lemma \ref{lem:D2u} and \eqref{eq:trace}, we can address 
\begin{equation}\label{eq:thetaD2u}
\lambda_i(D^2u(s))=a_iu'(s)+\theta_i\sum_{j=1}^na_j^2x_j^2u''(s),
\end{equation}
where $\theta_i\geq0$, depending on $s$ and satisfying $\sum_{i=1}^n\theta_i=1$. This enables us to get the following property of $f(\lambda(D^2u))$. Without loss of generality, we here assume that $\frac{\partial f}{\partial \lambda_1}(\lambda(A))=\max_{1\leq i\leq n}\frac{\partial f}{\partial \lambda_i}(\lambda(A))$.
\begin{Lem}\label{lem:ff}
Assume \eqref{eq:increase}, $u'(s)>0$ and $u''(s)\leq 0$. Let $\delta>0$ be fixed. If
\begin{equation*}
\lim_{s\to+\infty}u'(s)=1,\quad\lim_{s\to+\infty}su''(s)=0,
\end{equation*}
then there exists $\bar{s}=\bar{s}(\delta,f,A)>0$, such that for any $s>\bar{s}$,
\begin{equation*}
f(\lambda(D^2u))\geq f(a_1u'+(2a_n+\delta)su'', a_2u',\cdots, a_nu').
\end{equation*}
\end{Lem}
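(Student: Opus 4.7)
My plan is to compare $f(\lambda(D^2u))$ and $f(\mu)$, where $\mu := (a_1u'+(2a_n+\delta)su'',\,a_2u',\,\ldots,\,a_nu')$, by writing their difference as a line integral of $\nabla f$ along the segment joining them and then verifying that the integrand is nonnegative once $s$ is large; the margin $\delta$ in the first coordinate of $\mu$ is what makes the estimate close up.

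I would first record the structural information. From \eqref{eq:thetaD2u}, $\lambda_i(D^2u) = a_iu' + \theta_i T$ with $T := \sum_j a_j^2 x_j^2 u''$, $\theta_i\in[0,1]$, and $\sum_i\theta_i = 1$. The identity $\sum_j a_j x_j^2 = 2s$ combined with $a_j\leq a_n$ gives $\sum_j a_j^2 x_j^2 \leq 2a_n s$, whence (using $u''\leq 0$) $|T|\leq 2a_n s|u''|$ and $(2a_n+\delta)su''\leq T\leq 0$. Because $u'(s)\to 1$ and $su''(s)\to 0$, both $\lambda(D^2u)$ and $\mu$ converge to $a = (a_1,\ldots,a_n)$ as $s\to\infty$, uniformly over all $x$ with $\tfrac12 x^TAx = s$. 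Hence, for $s$ past some threshold $\bar s$ (depending only on $\delta,f,A$), the segment $\xi_t := \mu + t(\lambda(D^2u)-\mu)$, $t\in[0,1]$, sits inside any prescribed small neighborhood of $a$ in $\Gamma$, and
\begin{equation*}
f(\lambda(D^2u))-f(\mu) = \int_0^1 \nabla f(\xi_t)\cdot (\lambda(D^2u)-\mu)\,dt.
\end{equation*}

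Next, since $(\lambda(D^2u)-\mu)_1 = \theta_1 T - (2a_n+\delta)su''$ and $(\lambda(D^2u)-\mu)_i = \theta_i T$ for $i\geq 2$, the integrand simplifies to
\begin{equation*}
(2a_n+\delta)s|u''|\,\tfrac{\partial f}{\partial\lambda_1}(\xi_t) \;-\; |T|\sum_{i=1}^n\theta_i\,\tfrac{\partial f}{\partial\lambda_i}(\xi_t).
\end{equation*}
Invoking the standing WLOG hypothesis $\tfrac{\partial f}{\partial\lambda_1}(a)=\max_i\tfrac{\partial f}{\partial\lambda_i}(a)$ together with continuity of $\nabla f$, for every $\eta>0$ one has $\max_i\tfrac{\partial f}{\partial\lambda_i}(\xi) \leq (1+\eta)\tfrac{\partial f}{\partial\lambda_1}(\xi)$ for $\xi$ close enough to $a$. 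Taking $\eta := \delta/(2a_n)$ and enlarging $\bar s$ accordingly, the bounds $|T|\leq 2a_n s|u''|$ and $\sum_i\theta_i = 1$ yield
\begin{equation*}
|T|\sum_i\theta_i\,\tfrac{\partial f}{\partial\lambda_i}(\xi_t) \;\leq\; 2a_n s|u''|\,\max_i\tfrac{\partial f}{\partial\lambda_i}(\xi_t) \;\leq\; (2a_n+\delta) s|u''|\,\tfrac{\partial f}{\partial\lambda_1}(\xi_t),
\end{equation*}
so the integrand is nonnegative and the lemma follows.

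The main delicate point is that $\theta_i$ and $T$ (hence the segment $\xi_t$) depend on $x$ as well as on $s$; however, both the pointwise bound $|T|\leq 2a_n s|u''|$ and the convergence $\lambda(D^2u),\mu\to a$ are uniform in $x$ at fixed $s$, which is exactly what lets $\bar s$ depend only on $(\delta,f,A)$. A small additional check is that $\xi_t\in\Gamma$ throughout the segment, which is immediate from convexity of $\Gamma$ and proximity to $a\in\Gamma^\circ$.
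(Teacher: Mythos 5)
Your proof is correct and follows essentially the same route as the paper's: both start from the decomposition \eqref{eq:thetaD2u}, use $\sum_j a_j^2x_j^2\leq 2a_ns$, and exploit the continuity of $\nabla f$ near $\lambda(A)$ together with the assumed maximality of $\frac{\partial f}{\partial\lambda_1}(\lambda(A))$ so that the margin $\delta$ absorbs the error for large $s$. The only differences are cosmetic: you express $f(\lambda(D^2u))-f(\mu)$ as a line integral rather than via the mean value theorem, and you control the gradient components by a multiplicative factor $(1+\eta)$ with $\eta=\delta/(2a_n)$ instead of the paper's additive $\epsilon\leq \delta\frac{\partial f}{\partial\lambda_1}(\lambda(A))/(4a_n+\delta)$.
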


\begin{proof}
First, by \eqref{eq:thetaD2u} we see that $\lambda_i(D^2u)>0$ $(i=1,\cdots,n)$ for sufficiently large $s$, provided $u'>0$ and $su''\to0$ as $s\to+\infty$. This implies $\lambda(D^2u(s))\in\Gamma$ when $s$ is   very large. The same holds also for   
$$\bar{a}_\delta:=(a_1u'+(2a_n+\delta)su'',a_2u',\cdots, a_nu').$$ Hence, the expression of Lemma \ref{lem:ff} is valid.

To show the assertion true, we compute
\begin{align}
&f(\lambda(D^2u))-f(\bar{a}_\delta)\notag\\
=& [\theta_1U-(2a_n+\delta)s]u''\frac{\partial f}{\partial\lambda_1}(\tilde{a}_\delta)+Uu''\sum_{i=2}^n\theta_i\frac{\partial f}{\partial\lambda_i}(\tilde{a}_\delta):=\mathrm{I}\label{eq:thetaff}
\end{align}
where $$U:=\sum_{i=1}^n a_i^2x_i^2,\quad \text{and}\quad \tilde{a}_\delta:=t_0\bar{a}_\delta+(1-t_0)\lambda(D^2u)$$ for some $0\leq t_0\leq 1$. Since $$Uu''=O(su'') \quad\text{and}\quad u'\to 1,\  \text{as } s\to+\infty,$$ we have $$\tilde{a}_\delta\to \lambda(A) \quad \text{and}\quad \nabla f(\tilde{a}_\delta)\to\nabla f(\lambda(A)),\  \text{as } s\to+\infty.$$ Namely, for small $\epsilon>0$,  there exists $s(\epsilon)$ such that $$\left|\frac{\partial f}{\partial\lambda_i}(\tilde{a}_\delta)-\frac{\partial f}{\partial\lambda_i}(\lambda(A))\right|<\epsilon,\quad i=1,\cdots,n$$
when $s>s(\epsilon)$. It thus follows from \eqref{eq:thetaff} that
\begin{align*}
\mathrm{I}&\geq \frac{\partial f}{\partial\lambda_1}(\lambda(A))[U-(2a_n+\delta)s]u''+\epsilon[(1-2\theta_1)U+(2a_n+\delta)s]u''\\
&\geq -\delta su''\frac{\partial f}{\partial\lambda_1}(\lambda(A))+\epsilon(4a_n+\delta)su''.
\end{align*}
Therefore, $\mathrm{I}\geq 0$ if $\epsilon\leq \delta\frac{\partial f}{\partial\lambda_1}(\lambda(A))/(4a_n+\delta)$, which proves the conclusion.
\end{proof}

\begin{Rk}\label{rk:ff}
There are two special cases in which the fixed $\delta$ could be $0$; that is, 
\begin{equation}\label{eq:ff-0}
f(\lambda(D^2u))\geq f(a_1u'+2a_nsu'', a_2u',\cdots, a_nu').\end{equation}
The first one is when $a_1=a_2=\cdots=a_n$, \eqref{eq:ff-0} stands with equality for any $s>0$, which is obvious by Lemma \ref{lem:D2u}. The second one is if $\frac{\partial f}{\partial\lambda_1}(\lambda(A))>\frac{\partial f}{\partial\lambda_i}(\lambda(A))$, $i=2,\cdots,n$, then \eqref{eq:ff-0} stands with $s>0$ large enough. Indeed, given $\epsilon>0$, there is $s_0>0$ such that 
\begin{equation*}
\frac{\partial f}{\partial\lambda_i}(\tilde{a}_0)+\epsilon<\frac{\partial f}{\partial\lambda_1}(\tilde{a}_0)-\epsilon<\frac{\partial f}{\partial\lambda_1}(\lambda(A))\leq \frac{\partial f}{\partial\lambda_1}(\tilde{a}_0)+\epsilon, 
\end{equation*}
for $s>s_0$, $i=2,\cdots,n$. Similar to \eqref{eq:thetaff}, we obtain
\begin{align*}
&f(\lambda(D^2u))-f(a_1u'+2a_nsu'',a_2u',\cdots,a_nu')\\
\geq & \left(\frac{\partial f}{\partial\lambda_1}(\lambda(A))-\epsilon\right)(U-2a_ns)u''\geq 0
\end{align*}
only if $\epsilon<\frac{\partial f}{\partial\lambda_1}(\lambda(A))$.
\end{Rk}

 We continue our discussion by asking the diagonal matrix $A$ that appeared above, to be in the set $\mathcal{A}$ hereafter. In order to solve $f(\lambda(D^2u))\geq 1$ currently, Lemma \ref{lem:ff} inspires us to consider the implicit ordinary differential equation
\begin{equation}\label{eq:ode-f}
f\left(a_1u'+(2a_n+\delta)su'',a_2u',\ldots,a_nu'\right)=1.
\end{equation}
It is clear that if a solution of \eqref{eq:ode-f} exists and agrees with the assumption of Lemma \ref{lem:ff}, it is indeed a subsolution of \eqref{eq:pro}. For this reason, concerning \eqref{eq:ode-f} with initial data 
\begin{equation}\label{eq:ode-f-i}
u(1)=c_1,\quad u'(1)=c_2,
\end{equation}
where $c_1,c_2$ are given constants, we show the existence of its solutions on $[1,+\infty)$ and determine their asymptotic behavior at infinity. Precisely, 

\begin{Pro}\label{pro:sub}
Assume that \eqref{eq:increase}, \eqref{eq:c} and \eqref{eq:boundary} hold. Let $c_1\in\mathbb{R}$, $c_2>1$ and let $$\alpha_\delta=\frac{\sum_{i=1}^na_i\frac{\partial f}{\partial\lambda_i}(\lambda(A))}{(2a_n+\delta)\frac{\partial f}{\partial\lambda_1}(\lambda(A))}$$
be defined as in \eqref{eq:alpha-d}. For each $\delta\geq0$ such that $\alpha_\delta>1$, the ODE \eqref{eq:ode-f} with \eqref{eq:ode-f-i} has a smooth solution $u_{c_1,c_2,\delta}(s)$ on $[1,+\infty)$, such that $u_{c_1,c_2,\delta}'(s)\geq 1$ and
\begin{equation*}
u_{c_1,c_2,\delta}(s)=s+c_1+\mu(c_2)+O(s^{1-\alpha_\delta}),\quad s\rightarrow +\infty,
\end{equation*}
where $\mu(c_2)$ is a strictly increasing function of $c_2$ and satisfies
\begin{equation*}
\lim_{c_2\to+\infty}\mu(c_2)=+\infty.
\end{equation*}
\end{Pro}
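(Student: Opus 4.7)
The plan is to reduce the implicit ODE \eqref{eq:ode-f}--\eqref{eq:ode-f-i} to an autonomous first-order equation for $w := u'$ and analyze its phase line. Introduce the auxiliary function $\psi(\eta, w) := f(a_1 w + (2a_n+\delta)\eta,\, a_2 w, \ldots, a_n w)$. By \eqref{eq:c} and \eqref{eq:increase} we have $\psi(0,1) = f(a) = 1$ and $\partial_\eta \psi = (2a_n+\delta)\partial_{\lambda_1} f > 0$. For every $w > 1$, $\psi(0,w) = f(aw) > 1$ since $(d/dw) f(aw) = \sum_i a_i \partial_{\lambda_i} f > 0$, whereas as $\eta$ decreases the argument of $f$ approaches $\partial\Gamma$ where, by \eqref{eq:boundary}, $f$ drops strictly below $1$. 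The intermediate value theorem together with the implicit function theorem then produces a unique smooth function $\tilde G : [1,\infty) \to (-\infty, 0]$ with $\tilde G(1) = 0$, $\tilde G(w) < 0$ for $w > 1$, and $\psi(\tilde G(w), w) \equiv 1$, turning \eqref{eq:ode-f} into $s\, u''(s) = \tilde G(u'(s))$.

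Setting $\tau = \log s$ and $w(\tau) := u'(e^\tau)$, the equation becomes the autonomous scalar ODE $\dot w = \tilde G(w)$ with $w(0) = c_2$. Because $\tilde G$ is smooth, strictly negative on $(1,\infty)$, and vanishes only at $w = 1$, this initial value problem admits a unique solution that strictly decreases and converges to the equilibrium $w = 1$; back in the variable $s$, this yields a smooth global solution on $[1,\infty)$ with $u' \geq 1$ and $u'' \leq 0$, the integration constant being fixed by $u(1) = c_1$. Differentiating $\psi(\tilde G(w), w) \equiv 1$ at $w = 1$ gives $\tilde G'(1) = -\alpha_\delta$; linearization at the hyperbolic sink $w = 1$ yields $w(\tau) - 1 = O(e^{-\alpha_\delta \tau})$, equivalently $u'(s) - 1 = O(s^{-\alpha_\delta})$. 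Since $\alpha_\delta > 1$, the improper integral $\int_1^\infty (u'(t)-1)\,dt$ converges, and defining $\mu(c_2) := -1 + \int_1^\infty (u_{c_2}'(t) - 1)\,dt$ produces
$$u_{c_1,c_2,\delta}(s) - s - c_1 = \mu(c_2) - \int_s^\infty (u'(t)-1)\,dt = \mu(c_2) + O(s^{1-\alpha_\delta}).$$

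Strict monotonicity of $\mu$ follows immediately from uniqueness for the scalar ODE $\dot w = \tilde G(w)$: if $c_2 < c_2'$ then $u_{c_2'}'(s) > u_{c_2}'(s)$ for every $s \geq 1$, so the integral defining $\mu$ is strictly larger. For $\lim_{c_2\to\infty}\mu(c_2) = +\infty$, I exploit the time-translation invariance of the autonomous equation. Fix a reference value $c_0 > 1$ and set $\tau_1(c_2) := \int_{c_0}^{c_2} dw/(-\tilde G(w))$ and $s_1 := e^{\tau_1(c_2)}$. Uniqueness forces $u_{c_2}'(s_1 \sigma) = u_{c_0}'(\sigma)$ for $\sigma \geq 1$, and a direct computation matching the expansion above yields
$$\mu(c_2) = s_1\bigl(1 + \mu(c_0)\bigr) - 1 + \int_0^{\tau_1(c_2)} \bigl(w_{c_2}(\tau) - 1\bigr) e^\tau\, d\tau.$$
Because $1 + \mu(c_0) = \int_1^\infty (u_{c_0}'(t) - 1)\,dt > 0$, it remains to show that either $s_1 \to \infty$ (when $\int^\infty dw/(-\tilde G(w)) = \infty$) or, otherwise, the residual integral diverges; both scenarios are governed by the growth of $\tilde G$ at infinity, controlled by \eqref{eq:boundary}. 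The main obstacle is precisely this last dichotomy, together with the initial construction of $\tilde G$ globally on $[1,\infty)$; the remaining ingredients (global existence for the ODE, linearization at the sink, and monotone dependence on initial data) reduce to standard one-dimensional phase-line theory.
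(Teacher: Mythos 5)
Your overall route is the same as the paper's, only written in different clothing: your $\tilde G(w)$ is exactly $(g(w)-a_1w)/(2a_n+\delta)$ for the function $g$ of Lemma \ref{lem:fgw}, your autonomous equation $\dot w=\tilde G(w)$ in $\tau=\log s$ is precisely \eqref{eq:ode-w-1}, your identity $\tilde G'(1)=-\alpha_\delta$ is the paper's $g'(1)-a_1=-(2a_n+\delta)\alpha_\delta$, and your non-crossing argument for monotone dependence on $c_2$ replaces the variational equation of Lemma \ref{lem:fgw-2}. The construction of $\tilde G$ on all of $[1,\infty)$, which you list among the obstacles, is unproblematic: for each fixed $w\ge1$ the map $\eta\mapsto\psi(\eta,w)$ is strictly increasing by \eqref{eq:increase}, exceeds $1$ at $\eta=0$, and drops below $1$ as the first argument approaches $\partial\Gamma$ by \eqref{eq:boundary} (the ray in the $-e_1$ direction must exit $\Gamma\subset\{\sum_i\lambda_i>0\}$), so the intermediate value theorem plus the implicit function theorem give a globally defined smooth $\tilde G$; this is the paper's Lemma \ref{lem:fgw}. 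Your ``linearization at the hyperbolic sink'' claim for the exact rate $O(e^{-\alpha_\delta\tau})$ is correct but terse: one needs a preliminary crude exponential decay of $w-1$ and then integration of $\frac{d}{d\tau}\log(w-1)=-\alpha_\delta+O(w-1)$; the paper does the analogous two-step estimate in Lemma \ref{lem:fgw-1} via the modulus of continuity of $g'$.

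The genuine gap is the last step, $\lim_{c_2\to\infty}\mu(c_2)=+\infty$, which you reduce to a dichotomy and explicitly leave open; moreover you attribute its resolution to \eqref{eq:boundary}, which is not the relevant hypothesis. What closes it is the location of the cone: since $(g(w),a_2w,\dots,a_nw)\in\Gamma\subset\{\sum_i\lambda_i>0\}$, one has $g(w)>-\sum_{i\ge2}a_iw$, hence $0<-\tilde G(w)\le\frac{\sum_ia_i}{2a_n+\delta}\,w$, i.e.\ at most linear growth. Then $\tau_1(c_2)=\int_{c_0}^{c_2}\frac{dw}{-\tilde G(w)}\ge\frac{2a_n+\delta}{\sum_ia_i}\log\frac{c_2}{c_0}\to\infty$, so in your formula the first branch always occurs: $s_1=e^{\tau_1(c_2)}\to\infty$, the residual integral is nonnegative, and $\mu(c_2)\ge s_1\bigl(1+\mu(c_0)\bigr)-1\to\infty$ because $1+\mu(c_0)=\int_1^\infty\bigl(u_{c_0}'(t)-1\bigr)\,dt>0$. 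So your scheme is salvageable with this one bound, but as written the divergence of $\mu$ is unproved. The paper handles this step by a different (quantitative) argument: convexity of $w_{c_2,\delta}$ in $s$ bounds it below by its tangent line at $s=1$, and the same cone bound on $g$ yields the explicit estimate $1+\mu(c_2)\ge\frac{(2a_n+\delta)(c_2-1)^2}{2c_2\sum_ia_i}$, which tends to infinity.
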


The proof of Proposition \ref{pro:sub} is nontrivial. Modifying a technique used in \cite{Li-Bao-2014}, it first extracts a first-order ODE about $u'$ from \eqref{eq:ode-f}, and then get its solvability by applying Picard-Lindel\"of theorem, whereby a solution of \eqref{eq:ode-f} exists through integral. After that, the solution's asymptotic behavior and the dependence on initial values will be exploited from the extracted ODE via delicate analyses. For convenience, we shall split this detailed process into three lemmas to precisely present.

The first one, which helps us to find the first-order ODE satisfied for $u'$, is below. 
\begin{Lem}\label{lem:fgw}
Assume \eqref{eq:increase} and \eqref{eq:boundary}.  There is a unique monotone decreasing smooth function $g$ defined on $[1,+\infty)$ such that $$(g(w),a_2w,\cdots,a_nw)\in\Gamma,$$ and 
\begin{equation}\label{eq:fgw}
f(g(w),a_2w,\cdots,a_nw)=1,
\end{equation}
for $w\in[1,+\infty)$.
\end{Lem}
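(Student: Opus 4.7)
The plan is to define $g(w)$ at each fixed $w\ge 1$ via the intermediate value theorem applied to the single-variable function $t\mapsto f(t,a_2w,\ldots,a_nw)$, and then upgrade this pointwise construction to a global smooth monotone function by the implicit function theorem. More precisely, I would fix $w\ge 1$ and study the slice
$$I_w := \{ t\in\mathbb{R} : (t, a_2 w,\ldots, a_n w)\in\Gamma\}.$$
The cone inclusions $\{\lambda_i>0\}\subset\Gamma\subset\{\sum_i\lambda_i>0\}$ force $I_w$ to be a nonempty open interval of the form $(t_-(w),+\infty)$ with $t_-(w)\ge -(a_2+\cdots+a_n)w$, hence finite. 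Set $F_w(t):=f(t,a_2w,\ldots,a_nw)$. By \eqref{eq:increase}, $F_w$ is smooth and strictly increasing on $I_w$. By \eqref{eq:boundary}, as $t\searrow t_-(w)$ the point $(t,a_2w,\ldots,a_nw)$ approaches $\partial\Gamma$, so $F_w(t)<1$ for $t$ close to $t_-(w)$. On the other hand, $A\in\mathcal{A}$ gives $f(a_1,a_2,\ldots,a_n)=1$, so strict monotonicity in coordinates $2,\ldots,n$ combined with $w\ge 1$ yields $F_w(a_1)\ge 1$. The IVT together with strict monotonicity then produces a unique $g(w)\in I_w$ with $F_w(g(w))=1$; in particular $g(w)\le a_1$ and $g(1)=a_1$.

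For smoothness and monotonicity, I would apply the implicit function theorem to the smooth map $\Phi(w,t):=f(t,a_2w,\ldots,a_nw)-1$. Its partial $\partial\Phi/\partial t=\partial f/\partial\lambda_1$ is strictly positive by \eqref{eq:increase}, so $g$ is $C^\infty$ on $[1,+\infty)$ and implicit differentiation gives
$$g'(w)=-\frac{\sum_{i=2}^n a_i\,\frac{\partial f}{\partial\lambda_i}(g(w),a_2w,\ldots,a_nw)}{\frac{\partial f}{\partial\lambda_1}(g(w),a_2w,\ldots,a_nw)}<0,$$
where the negativity uses $a_i>0$ and the strict positivity of every partial of $f$. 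Uniqueness of the global function $g$ satisfying \eqref{eq:fgw} is just a restatement of the pointwise uniqueness from the previous step.

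The only mildly delicate point is ruling out that the implicit curve $(g(w),a_2w,\ldots,a_nw)$ can escape $\Gamma$ for finite $w$, or that $I_w$ could degenerate. Both issues are handled by the two-sided cone squeeze $\{\lambda_i>0\}\subset\Gamma\subset\{\sum_i\lambda_i>0\}$, which makes $I_w$ a proper interval with finite left endpoint, together with \eqref{eq:boundary}, which keeps the level value $f=1$ strictly away from $\partial\Gamma$. No continuation argument or a priori bound on $g$ beyond these observations is needed.
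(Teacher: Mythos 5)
Your proposal is correct and follows essentially the same route as the paper: intermediate value theorem along the first coordinate using the monotonicity \eqref{eq:increase}, the boundary condition \eqref{eq:boundary} and the convexity of $\Gamma$, followed by implicit differentiation (the implicit function theorem) to get smoothness and $g'<0$. Your treatment of the slice $I_w$ and its finite left endpoint just makes explicit the boundary point at which \eqref{eq:boundary} is invoked, which the paper handles more briefly via "$f(\epsilon,a_2w,\cdots,a_nw)<1$ for $\epsilon$ small."
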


\begin{proof}
The assumption \eqref{eq:increase} and $A\in\mathcal{A}$ imply that
\begin{equation*}
f(a_1,a_2w,\cdots,a_nw)>1,\quad\text{if }w>1.
\end{equation*}
When $\epsilon$ is small enough, \eqref{eq:boundary} implies that $f(\epsilon,a_2w,\cdots,a_nw)<1$. Thus, by intermediate theorem and the convexity of $\Gamma$, there exists a unique $g(w)$ such that 
\begin{equation*}
f(g(w),a_2w,\cdots,a_nw)=1.
\end{equation*}
Differentiating it with respect to $w$, we find $$g'(w)\frac{\partial f}{\partial\lambda_1}(g(w),a_2w,\cdots,a_nw)+\sum_{i=2}^na_i\frac{\partial f}{\partial\lambda_i}(g(w),a_2w,\cdots,a_nw)=0.$$
So, 
\begin{equation}\label{eq:dw-g}
g'(w)=-\frac{\sum_{i=2}^na_i\frac{\partial f}{\partial\lambda_i}(g(w),a_2w,\cdots,a_nw)}{\frac{\partial f}{\partial\lambda_1}(g(w),a_2w,\cdots,a_nw)}.
\end{equation}
Accordingly, by the smoothness of $f$ and \eqref{eq:increase}, $g$ is smooth  and $g'(w)<0$.
\end{proof}

Combining \eqref{eq:ode-f} and \eqref{eq:fgw}, we replace the $u'(s)$ of \eqref{eq:ode-f} by $w(s)$ to deduce
$$g(w(s))=a_1w(s)+(2a_n+\delta)sw'(s)$$
for $w(s)\geq 1$ and $s\geq 1$. With \eqref{eq:ode-f-i}, this yields the first-order ODE: 
\begin{equation}\label{eq:ode-w-1}
\begin{cases}
\frac{dw}{ds}=\frac{g(w)-a_1w}{(2a_n+\delta)s},\\
w(1)=c_2.
\end{cases}
\end{equation}
Notice from Lemma \ref{lem:fgw} that $g(1)=a_1$ and $g(w)<a_1$ if $w>1$. This implies $\frac{dw}{ds}\leq 0$ in \eqref{eq:ode-w-1}. If $c_2=1$, \eqref{eq:ode-w-1} gives a constant solution $w(s)\equiv1$. Hence, one has to search for the nontrival solution when $c_2>1$. Actually, applying the well known Picard-Lindel\"of theorem directly, it is seen that problem \eqref{eq:ode-w-1} admits locally a unique smooth solution, denoted by $w_{c_2,\delta}(s)$, which can be extended to the whole interval $[1,+\infty)$ by the maximal existence theorem in the theory of ODEs. Also, when $c_2>1$,  we have $1<w_{c_2,\delta}(s)<c_2$ for $s>1$ due to the uniqueness of solution to \eqref{eq:ode-w-1}, and $w_{c_2,\delta}$ will converge to $1$ at infinity:
\begin{Lem}\label{lem:fgw-1}
Let $c_2>1$ and let $\delta\geq 0$ be supposed as in Proposition \ref{pro:sub}. If $w_{c_2,\delta}(s)$ is a solution of \eqref{eq:ode-w-1} on $[1,+\infty)$, then
\begin{equation*}
\lim_{s\to+\infty} w_{c_2,\delta}(s)=1,
\end{equation*}
and
\begin{equation}\label{eq:asym-s-w}
w_{c_2,\delta}(s)-1=O(s^{-\alpha_\delta}),\quad\text{as}\ s\to+\infty,
\end{equation}
where $\alpha_\delta$ is as in Proposition \ref{pro:sub}.
\end{Lem}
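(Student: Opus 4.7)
The plan is to show that $w_{c_2,\delta}$ decreases strictly and monotonically to a limit that must equal $1$, and then to extract the sharp algebraic decay rate by recognizing that, after the substitution $t=\ln s$, equation \eqref{eq:ode-w-1} becomes an autonomous scalar ODE with a hyperbolic stable fixed point at $w=1$ whose linearized eigenvalue is exactly $-\alpha_\delta$.

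\textbf{Monotone convergence to $1$.} By Lemma~\ref{lem:fgw} we have $g(1)=a_1$ and $g$ is strictly decreasing, so $g(w)-a_1 w<0$ for every $w>1$. Combined with the a priori bound $1<w_{c_2,\delta}(s)<c_2$ noted after \eqref{eq:ode-w-1}, this forces $w'(s)<0$ throughout $[1,\infty)$, hence $w_{c_2,\delta}(s)\searrow L$ for some $L\ge 1$. To rule out $L>1$, I would argue by contradiction: if $L>1$, then $g(w(s))-a_1 w(s)\le g(L)-a_1L=:-c<0$ for every $s\ge 1$, so \eqref{eq:ode-w-1} gives $w'(s)\le -c/[(2a_n+\delta)s]$; integrating from $1$ to $s$ drives $w(s)\to-\infty$, contradicting $w>1$.

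\textbf{Sharp rate via linearization.} Setting $t=\ln s$ turns \eqref{eq:ode-w-1} into the autonomous scalar equation
\[
\frac{dw}{dt}=\frac{g(w)-a_1 w}{2a_n+\delta}.
\]
Differentiating \eqref{eq:dw-g} at $w=1$ (where $g(1)=a_1$) and using the WLOG assumption $\hat f_\lambda(a)=\partial f/\partial\lambda_1(a)$ together with $\hat a=a_n$, one computes
\[
\frac{g'(1)-a_1}{2a_n+\delta}=-\frac{a_1\partial_1 f(a)+\sum_{i=2}^n a_i\partial_i f(a)}{(2a_n+\delta)\partial_1 f(a)}=-\alpha_\delta.
\]
Expanding $g(w)-a_1 w=(g'(1)-a_1)(w-1)+O((w-1)^2)$ near $w=1$ and separating variables thus yields
\[
\ln(w-1)=-\alpha_\delta\,t+C+O(w-1),
\]
which, on exponentiating in $s=e^t$, gives the desired $w(s)-1=O(s^{-\alpha_\delta})$.

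\textbf{The main obstacle.} The conceptual picture above is standard; the genuine technical point is making the ``$O$'' rigorous. A direct Gr\"onwall-type comparison, based on the pointwise convergence $\frac{g(w)-a_1w}{(2a_n+\delta)(w-1)}\to-\alpha_\delta$ as $w\to 1$, only yields two-sided bounds of the form $C_1 s^{-\alpha_\delta-\varepsilon}\le w(s)-1\le C_2 s^{-\alpha_\delta+\varepsilon}$ for every small $\varepsilon>0$. Upgrading the upper bound to a clean $O(s^{-\alpha_\delta})$ requires a bootstrap: feed the preliminary polynomial decay back into the integrated error term $\int_{s_0}^\infty |w(\sigma)-1|/\sigma\,d\sigma$, which is finite thanks to the preliminary $\varepsilon$-decay, and use the second-order Taylor remainder of $g$ at $1$. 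This book-keeping is the one careful step in an otherwise standard ODE argument.
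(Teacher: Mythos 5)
Your proposal is correct and follows essentially the paper's own route: the paper likewise first obtains a crude polynomial decay (there, $w_{c_2,\delta}-1\le Cs^{-a_1/(2a_n+\delta)}$ from $g(w)\le a_1$, which also gives the limit $1$), then integrates $\frac{d}{ds}\ln(w_{c_2,\delta}-1)=\frac{g'(\xi)-a_1}{(2a_n+\delta)s}$ using $g'(1)-a_1=-(2a_n+\delta)\alpha_\delta$ and controls the deviation $g'(\xi)-g'(1)$ by the (Dini) modulus of continuity of $g'$, so that the resulting error integral converges --- precisely the bootstrap you describe. The only cosmetic differences are your autonomous $t=\ln s$ framing, the $\varepsilon$-Gr\"onwall route to the preliminary rate, and a Taylor remainder in place of the modulus of continuity.
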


\begin{proof}
First, $\lim_{s\to+\infty}w_{c_2,\delta}(s)$ exists, by $w_{c_2,\delta}'(s)\leq0$ and $w_{c_2,\delta}(s)\geq1$.
Let us show this limit is $1$. Since
\begin{equation}\label{eq:ds-w-c2}
\frac{d}{ds}(w_{c_2,\delta}-1)=\frac{g(w_{c_2,\delta})-a_1-a_1w_{c_2,\delta}+a_1}{(2a_n+\delta)s}\leq -\frac{a_1w_{c_2,\delta}-a_1}{(2a_n+\delta)s},
\end{equation}
that is, $$\frac{d(w_{c_2,\delta}-1)}{w_{c_2,\delta}-1}\leq\frac{-a_1ds}{(2a_n+\delta)s}.$$
Integrating the above gives $w_{c_2,\delta}-1\leq Cs^{-\frac{a_1}{(2a_n+\delta)}}$, where $C>0$ is some constant.  This leads to $$\lim_{s\to+\infty}w_{c_2,\delta}(s)=1.$$

Next, we prove \eqref{eq:asym-s-w}. It has been known that $w_{c_2,\delta}-1\leq Cs^{-\frac{a_1}{(2a_n+\delta)}}$. Rewrite \eqref{eq:ds-w-c2} as:
\begin{align}
\frac{d}{ds}(w_{c_2,\delta}-1)&=\frac{g(w_{c_2,\delta})-g(1)-a_1w_{c_2,\delta}+a_1}{(2a_n+\delta)s}\notag\\
&=\frac{w_{c_2,\delta}-1}{(2a_n+\delta)s}\left(g'(\theta w_{c_2,\delta}+(1-\theta))-a_1\right)\label{eq:ds-w-c2-1}
\end{align}
for some $\theta\in(0,1)$. Note $1< \theta w_{c_2,\delta}+(1-\theta)<c_2$. By \eqref{eq:dw-g}, it follows that 
\begin{align}
g'(\theta w_{c_2,\delta}+(1-\theta))-a_1&=g'(\theta w_{c_2,\delta}+(1-\theta))-g'(1)+g'(1)-a_1\notag\\
&\leq M_{g'}(w_{c_2,\delta}-1)-(2a_n+\delta)\alpha_\delta\notag\\
&\leq M_{g'}\left(Cs^{-\frac{a_1}{(2a_n+\delta)}}\right)-(2a_n+\delta)\alpha_\delta,\label{eq:g-th-w-c2}
\end{align}
where $M_{g'}(s)$ denotes the modulus of continuity of $g'$, i.e. 
$$
M_{g'}(s):=\sup_{|q_1-q_2|\leq s;\ 1<q_1<q_2<c_2}|g'(q_1)-g'(q_2)|.
$$
Combining \eqref{eq:ds-w-c2-1} and \eqref{eq:g-th-w-c2}, one obtains
\begin{equation*}
\frac{d(w_{c_2,\delta}-1)}{w_{c_2,\delta}-1}\leq \frac{M_{g'}\left(Cs^{-\frac{a_1}{(2a_n+\delta)}}\right)-(2a_n+\delta)\alpha_\delta}{(2a_n+\delta)s}ds.
\end{equation*}
Hence, by integrating from $1$ to $s$,  we have 
\begin{equation}\label{eq:n-w-c2}
\ln(w_{c_2,\delta}-1)-\ln(c_2-1)\leq\int_1^s\frac{M_{g'}\left(Ct^{-\frac{a_1}{(2a_n+\delta)}}\right)}{(2a_n+\delta)t}\,dt-\alpha_\delta\ln s.
\end{equation}
Since $g'$ is Dini continuous,
$$
\int_1^\infty\frac{1}{t}M_{g'}\left(Ct^{-\frac{a_1}{(2a_n+\delta)}}\right)\,dt=\frac{(2a_n+\delta)}{a_1}\int_0^{C}\frac{M_{g'}(s)}{s}ds<\infty.
$$
Therefore, \eqref{eq:n-w-c2} indicates that $0<w_{c_2,\delta}(s)-1\leq C_1s^{-\alpha_\delta}$ for $s>1$. \eqref{eq:asym-s-w} is proved.
\end{proof}

It remains to show the dependence of the solution of \eqref{eq:ode-w-1} on the initial value $c_2$. 
\begin{Lem}\label{lem:fgw-2}
Let $w_{c_2,\delta}(s)$ be a solution of \eqref{eq:ode-w-1} with $\delta\geq 0$ and $c_2>1$. Then $0<\frac{\partial w_{c_2,\delta}}{\partial c_2}\leq 1$ and $\lim_{c_2\to+\infty}w_{c_2,\delta}(s)=+\infty$ for $s\in[1,+\infty)$.
\end{Lem}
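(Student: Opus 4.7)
My plan splits the lemma into two independent assertions. For the bound $0 < \partial_{c_2} w_{c_2,\delta} \leq 1$, I would differentiate the ODE \eqref{eq:ode-w-1} with respect to the initial value $c_2$; writing $v := \partial_{c_2} w_{c_2,\delta}$, this gives the linear first-order equation
\[
v'(s) = \frac{g'(w_{c_2,\delta}(s)) - a_1}{(2a_n+\delta)\, s}\, v(s), \qquad v(1) = 1,
\]
which integrates explicitly to
\[
v(s) = \exp\!\left(\int_1^s \frac{g'(w_{c_2,\delta}(\tau)) - a_1}{(2a_n+\delta)\tau}\, d\tau\right) > 0.
\]
By Lemma \ref{lem:fgw} we have $g' < 0$, and since $a_1 > 0$, the exponent is strictly negative for $s>1$, hence $0 < v(s) \leq 1$ on $[1,+\infty)$.

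For the divergence $\lim_{c_2\to\infty} w_{c_2,\delta}(s) = +\infty$, I would separate variables in \eqref{eq:ode-w-1}. Since $g(w) < a_1 \leq a_1 w$ for $w > 1$ by Lemma \ref{lem:fgw}, we have $a_1 w - g(w) > 0$, and integrating from $1$ to $s$ yields the identity
\[
\int_{w_{c_2,\delta}(s)}^{c_2} \frac{d\tau}{a_1 \tau - g(\tau)} = \frac{\ln s}{2a_n+\delta}.
\]
Monotonicity in $c_2$ (from the first part) guarantees that $\ell(s) := \lim_{c_2 \to \infty} w_{c_2,\delta}(s) \in (1,+\infty]$ exists for each fixed $s > 1$. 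If $\ell(s) < +\infty$, passing to the limit $c_2 \to \infty$ converts the left-hand side into $\int_{\ell(s)}^\infty \frac{d\tau}{a_1\tau - g(\tau)}$, so the claim reduces to showing that this tail integral diverges.

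This last divergence is the crux and is where the cone condition $\Gamma \subset \{\lambda \in \mathbb{R}^n : \sum_i \lambda_i > 0\}$ must enter. Since $(g(\tau), a_2\tau, \dots, a_n\tau) \in \Gamma$, the coordinates sum to a positive number, so $g(\tau) > -(a_2 + \cdots + a_n)\tau$, giving the linear upper bound
\[
a_1\tau - g(\tau) < (a_1 + a_2 + \cdots + a_n)\,\tau.
\]
Consequently, for any $\alpha > 1$,
\[
\int_\alpha^\infty \frac{d\tau}{a_1\tau - g(\tau)} \geq \frac{1}{a_1 + \cdots + a_n}\int_\alpha^\infty \frac{d\tau}{\tau} = +\infty,
\]
contradicting the finiteness of $\frac{\ln s}{2a_n+\delta}$. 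Hence $\ell(s) = +\infty$ for every $s > 1$; the case $s = 1$ is trivial since $w_{c_2,\delta}(1) = c_2$. The main obstacle is precisely this tail estimate on $\int \frac{d\tau}{a_1\tau - g(\tau)}$: without the cone structure of $\Gamma$ the function $g$ could decay arbitrarily fast and the integral might converge, which would allow the limit to remain finite and the conclusion to fail.
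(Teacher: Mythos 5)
Your proposal is correct. The first half is exactly the paper's argument: differentiate \eqref{eq:ode-w-1} in $c_2$, solve the linear variational equation explicitly, and use $g'<0$, $a_1>0$ to get $0<\partial_{c_2}w_{c_2,\delta}\leq 1$. For the divergence as $c_2\to+\infty$, however, you take a genuinely different route. The paper stays with the variational equation: it bounds $g'(w_{c_2,\delta})\geq-\sum_{i=2}^na_i$ via \eqref{eq:dw-g} (this leans on the normalization $\frac{\partial f}{\partial\lambda_1}=\max_i\frac{\partial f}{\partial\lambda_i}$), deduces the lower bound $\partial_{c_2}w_{c_2,\delta}\geq e^{C_1(A,\delta,s)}>0$ uniformly in $c_2$ for each fixed $s$, and concludes by integrating in $c_2$. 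You instead integrate the separable ODE exactly, obtaining
\begin{equation*}
\int_{w_{c_2,\delta}(s)}^{c_2}\frac{d\tau}{a_1\tau-g(\tau)}=\frac{\ln s}{2a_n+\delta},
\end{equation*}
and rule out a finite limit $\ell(s)$ by showing the tail integral diverges, using the linear bound $g(\tau)>-\sum_{i=2}^na_i\tau$ coming from $\Gamma\subset\{\sum_i\lambda_i>0\}$ — a bound the paper itself invokes only later, in the proof of Proposition \ref{pro:sub}. Both proofs hinge on the same structural fact (a linear control of $g$ or $g'$ forced by the cone), but yours has the mild advantage of bypassing the pointwise estimate on $g'$ at points $(g(w),a_2w,\dots,a_nw)$ and yielding a clean quantitative identity, while the paper's version additionally records the uniform positive lower bound on $\partial_{c_2}w_{c_2,\delta}$. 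Your argument does implicitly use $w_{c_2,\delta}(s)>1$ for all $s$ (so the denominator $a_1\tau-g(\tau)$ stays positive on the integration range); this is established in the paper right after \eqref{eq:ode-w-1} by uniqueness against the constant solution $w\equiv1$, so it is available, but it is worth citing explicitly.
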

\begin{proof}
  We write $z(s):=\frac{\partial w_{c_2,\delta}}{\partial c_2}(s)$ and infer from \eqref{eq:ode-w-1} that
\begin{equation*}
\begin{cases}
\frac{dz}{ds}=\frac{g'(w_{c_2,\delta})-a_1}{(2a_n+\delta)s}z,\\
z(1)=1.
\end{cases}
\end{equation*}
This results in $z(s)=e^{C_0}$ where 
$$
C_0:=\int_1^s\frac{g'(w_{c_2,\delta})-a_1}{(2a_n+\delta)t}\,dt. $$
Noticing $g'(w_{c_2,\delta})\leq0$ and $C_0\leq 0$, it immediately follows that $0<z\leq1$.

To show $\lim_{c_2\to+\infty}w_{c_2,\delta}(s)=+\infty$ for given $s\in[1,+\infty)$,  we notice from \eqref{eq:dw-g} that 
$$g'(w_{c_2,\delta})\geq -\sum_{i=2}^na_i.$$
So $$C_0\geq\int_1^s\frac{-\sum_{i=1}^na_i}{(2a_n+\delta)t}\,dt:=C_1(A,\delta,s)>-\infty.$$
This implies that  $z\geq e^{C_1}>0$ for each fixed $s\geq 1$. Thus the conclusion holds.
\end{proof}
With Lemmas \ref{lem:fgw}-\ref{lem:fgw-2}, we now are in a position to prove Proposition \ref{pro:sub}.
\begin{proof}[Proof of Proposition \ref{pro:sub}]
As argued after Lemma \ref{lem:fgw}, if $u$ satisfies \eqref{eq:ode-f}, then its derivative $u'$ satisfies \eqref{eq:ode-w-1}. Thus, given a solution $w_{c_2,\delta}(s)$ of  \eqref{eq:ode-w-1} with $c_2>1$, we let 
\begin{equation}\label{eq:def-u-d}
u_{c_1,c_2,\delta}(s)=\int_1^sw_{c_2,\delta}(t)\,dt+c_1.
\end{equation}
Clearly, $u_{c_1,c_2,\delta}$ is a solution of problem \eqref{eq:ode-f} and \eqref{eq:ode-f-i}, and $u_{c_1,c_2,\delta}'=w_{c_2,\delta}\geq 1$ as well. Moreover, rewrite it as 
\begin{align*}
u_{c_1,c_2,\delta}(s)&=\int_1^s(w_{c_2,\delta}(t)-1)\,dt+s-1+c_1\\
&=\int_1^\infty(w_{c_2,\delta}(t)-1)\,dt+s-1+c_1-\int_s^\infty(w_{c_2,\delta}(t)-1)\,dt.
\end{align*}
By \eqref{eq:asym-s-w}, we put $\mu(c_2):=\int_1^{\infty}(w_{c_2,\delta}(t)-1)\,dt-1<\infty$. And we get
$$u_{c_1,c_2,\delta}(s)=s+c_1+\mu(c_2)+O(s^{1-\alpha_\delta}),\quad\text{as }s\to\infty.$$

Next, we prove $\lim_{c_2\to\infty}\mu(c_2)=\infty$. Lemma \ref{lem:fgw-2} shows that $\mu(c_2)$ is strictly increasing about $c_2$. Since for $s>1$,
\begin{equation*}
\frac{d^2w_{c_2,\delta}}{ds^2}=\frac{(g'(w_{c_2,\delta})-a_1-1)(g(w_{c_2,\delta})-a_1w_{c_2,\delta})}{(2a_n+\delta)s^2}>0.
\end{equation*}
There holds
\begin{equation*}
w_{c_2,\delta}(s)>\frac{g(c_2)-a_1c_2}{2a_n+\delta}(s-1)+c_2,\quad\text{for  }s>1.
\end{equation*}
Then, by the definition of $\Gamma$, we have $$-\sum_{i=2}^n a_i q<g(q)<a_1,\quad \text{for } q>1.$$ Hence,
\begin{align*}
&\int_1^\infty(w_{c_2,\delta}(t)-1)\,dt\\
\geq &\int_1^{\frac{(1-c_2)(2a_n+\delta)}{g(c_2)-a_1c_2}+1} \left(\frac{g(c_2)-a_1c_2}{2a_n+\delta}(t-1)+c_2-1\right)dt\\
\geq &{}\frac{(2a_n+\delta)(c_2-1)^2}{2(a_1c_2-g(c_2))}\geq\frac{(2a_n+\delta)(c_2-1)^2}{2c_2\sum_{i=1}^na_i}.
\end{align*}
So that
\begin{equation*}
\lim_{c_2\to\infty}\mu(c_2)=\infty.
\end{equation*}
\end{proof}

Let us conclude this section. Proposition \ref{pro:sub} provides a solution of \eqref{eq:ode-f}, i.e. $u_{c_1,c_2,\delta}(s)$, with the property that $$u_{c_1,c_2,\delta}'=w_{c_2,\delta}\geq 1\quad \text{and}\quad u_{c_1,c_2,\delta}''=w_{c_2,\delta}'\leq 0.$$ Furthermore, $$u_{c_1,c_2,\delta}'\to 1\quad \text{and}\quad su_{c_1,c_2,\delta}''=O(s^{-\alpha_\delta})\to 0,\quad \text{as } s\to+\infty.$$ These exactly agree with the assumptions of Lemma \ref{lem:ff}. Consequently, $u_{c_1,c_2,\delta}(s)$ will be a subsolution of equation \eqref{eq:pro} when $s$ is large enough. More precisely, we arrive at the following final result of the section, which plays a key role in proving Theorem \ref{thm:main}.
\begin{Pro}\label{pro:sub-f}
Assume \eqref{eq:increase}, \eqref{eq:c} and \eqref{eq:boundary}. Let $A\in\mathcal{A}$ be diagonal  with $\lambda(A)=(a_1,a_2,\cdots,a_n)$, $s=\frac12x^TAx$ $(x\in\mathbb{R}^n)$, and let $\alpha_\delta$ be as in \eqref{eq:alpha-d}. For each $\delta>0$ satisfying $\alpha_\delta>1$, there is $\bar{s}>1$, depending on $\delta$, $A$ and $f$, such that when $s>\bar{s}$ the function $u_{c_1,c_2,\delta}(s)$ given by Proposition \ref{pro:sub} is a smooth subsolution of equation \eqref{eq:pro} and fulfills
\begin{equation*}
u_{c_1,c_2,\delta}(s)=s+c_1+\mu(c_2)+O(s^{1-\alpha_\delta}),\quad\text{as } s\to+\infty.
\end{equation*} 

In particular, if $a_1=a_2=\cdots=a_n$ or $\frac{\partial f}{\partial\lambda_1}(\lambda(A))>\frac{\partial f}{\partial\lambda_i}(\lambda(A))$, $i=2,\cdots,n$, the above assertion holds further for $\delta\geq 0$ whenever $\alpha_\delta>1$.
\end{Pro}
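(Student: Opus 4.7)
The plan is to combine Proposition \ref{pro:sub}, which produces the solution $u_{c_1,c_2,\delta}$ of the implicit ODE \eqref{eq:ode-f}, with Lemma \ref{lem:ff}, which pins $f(\lambda(D^2 u))$ from below by precisely the expression occurring inside that ODE. Virtually all the technical work has already been carried out in Section \ref{sec:2}, so the argument reduces to verifying hypotheses and then assembling.

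First I would check that $u:=u_{c_1,c_2,\delta}$ meets every hypothesis required by Lemma \ref{lem:ff}. From \eqref{eq:def-u-d} we have $u' = w_{c_2,\delta}$ and $u'' = w_{c_2,\delta}'$. Proposition \ref{pro:sub} (via Lemma \ref{lem:fgw-1}) already gives $u'\ge 1$ and $u'(s)\to 1$ as $s\to\infty$, while the ODE \eqref{eq:ode-w-1} combined with the inequality $g(w)\le g(1)=a_1\le a_1 w$ for $w\ge 1$ (Lemma \ref{lem:fgw}) forces $u''\le 0$. Rewriting the ODE produces
\[
s u''(s) = \frac{g(w_{c_2,\delta}(s)) - a_1 w_{c_2,\delta}(s)}{2a_n + \delta} \longrightarrow \frac{g(1) - a_1}{2a_n + \delta} = 0,
\]
which is the last hypothesis of Lemma \ref{lem:ff}.

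Next, fixing the threshold $\bar{s}=\bar{s}(\delta,f,A)$ supplied by Lemma \ref{lem:ff}, the lemma together with the defining identity \eqref{eq:ode-f} yields, for all $s>\bar{s}$,
\[
f(\lambda(D^2 u(s))) \ge f\bigl(a_1 u' + (2a_n+\delta) s u'',\, a_2 u',\, \ldots,\, a_n u'\bigr) = 1.
\]
Admissibility of $u$ on $\{s>\bar s\}$ is automatic: by Lemma \ref{lem:D2u} and $u'\to 1$, $su''\to 0$, each $\lambda_i(D^2u(s))$ is positive for $s$ large (enlarging $\bar s$ if necessary), and the positive cone is contained in $\Gamma$. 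The asymptotic expansion $u(s)=s+c_1+\mu(c_2)+O(s^{1-\alpha_\delta})$ is inherited verbatim from Proposition \ref{pro:sub}.

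For the two special cases (equal eigenvalues $a_1=\cdots=a_n$, or $\partial_{\lambda_1} f(\lambda(A))$ strictly maximal among the partials after the symmetric relabelling), I would simply substitute Remark \ref{rk:ff} for Lemma \ref{lem:ff}; this is exactly the device that legalizes the choice $\delta=0$, and the rest of the chain of inequalities goes through unchanged via \eqref{eq:ode-f} with $\delta=0$. I do not expect a genuine obstacle in this proof: the only care needed is bookkeeping around $\bar s$, which must be large enough both for Lemma \ref{lem:ff} (or Remark \ref{rk:ff}) to apply and for admissibility to hold, and since both depend only on $\delta$, $A$, and $f$, taking the maximum of the relevant thresholds suffices.
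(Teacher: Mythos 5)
Your proposal is correct and follows essentially the same route as the paper: the paper likewise verifies that $u_{c_1,c_2,\delta}'=w_{c_2,\delta}\geq 1$, $u''\leq 0$, $u'\to 1$ and $su''\to 0$, then combines Lemma \ref{lem:ff} (or Remark \ref{rk:ff} for the two special cases with $\delta=0$) with the ODE identity \eqref{eq:ode-f} and the expansion from Proposition \ref{pro:sub}. Your extra details (deducing $su''\to 0$ directly from \eqref{eq:ode-w-1} and noting admissibility via Lemma \ref{lem:D2u}) are consistent with what the paper leaves implicit.
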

\begin{proof}
It follows directly by combining Lemma \ref{lem:ff}, Remark \ref{rk:ff} and Proposition \ref{pro:sub}.
\end{proof}

\section{Proof of Theorem \ref{thm:main}  and some applications}\label{sec:3}
\subsection{Proof of Theorem \ref{thm:main}} \label{subsec:3-1}
We will prove Theorem \ref{thm:main} by applying an adapted Perron's method. Roughly speaking, the viscosity solution of problem \eqref{eq:pro} prescribed by \eqref{eq:asym-s} can be obtained by splicing together the supremum of barriers at the boundary of domain and the subsolutions of \eqref{eq:pro} constructed in Proposition \ref{pro:sub-f}. Such arguments have been employed in \cite{Bao-Li-Li-2014, Li-Li-Zhao-2019, Li-Li-2018,Li2019, Caffarelli-Li-2003} for the solvability of problem \eqref{eq:pro} associated with those special forms \eqref{eq:k-sigma}-\eqref{eq:Lag}. To present the precise proof in the general setting, we need to introduce several lemmas.

First, we introduce the exact statements about the Perron's method and comparison principle. They are adaptions of those appeared in \cite{Ishii1989,Ishii1992,Jensen1988,Urbas1990}, and one may consult \cite{Li-Bao-2014} for the specific proof of them. 
\begin{Lem}\label{lem:perron-m}
Assume \eqref{eq:increase}. Let $\Omega$ be a domain in $\mathbb{R}^n$, $\varphi\in C^0(\partial\Omega)$, and $\underline{u},\bar{u}\in C^0(\overline{\Omega})$ respectively to be the viscosity subsolution and supersolution to $f(\lambda(D^2u))=1$. Suppose $\underline{u}\leq \bar{u}$, and $\underline{u}=\varphi$ on $\partial \Omega$. If $v\in C^0(\overline{\Omega})$ is also the viscosity subsolution to $f(\lambda(D^2u))=1$ with $$\underline{u}\leq v\leq\bar{u}\  \text{in}\ \Omega\ \ \text{and}\ \ v=\varphi\ \mathrm{on}\ \partial\Omega,$$
then for all such $v$, $$u(x):=\sup\{v(x)\}$$ 
is the unique viscosity solution of problem 
\begin{equation*}
\left\{
\begin{array}{ll}
f(\lambda(D^2u))=1, & \mathrm{in}\ \Omega,\\
u=\varphi, & \mathrm{on}\ \partial \Omega.
\end{array}
\right.
\end{equation*}
\end{Lem}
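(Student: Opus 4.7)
The plan is to follow the classical Perron framework in its viscosity formulation (in the spirit of \cite{Ishii1989,Ishii1992}). I introduce the family
\[
\mathcal{F} := \{v\in C^0(\overline{\Omega}) : v \text{ is a viscosity subsolution of } f(\lambda(D^2v))=1,\ \underline{u}\le v\le \bar{u} \text{ in } \Omega,\ v=\varphi \text{ on }\partial\Omega\},
\]
which is nonempty since $\underline{u}\in\mathcal{F}$, and take as candidate $u(x) := \sup_{v\in\mathcal{F}} v(x)$. Then $\underline{u}\le u\le \bar{u}$ in $\Omega$ and $u=\varphi$ on $\partial\Omega$ hold tautologically. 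The argument splits into four steps: (a) the upper semicontinuous envelope $u^*$ is a viscosity subsolution on $\Omega$; (b) the lower semicontinuous envelope $u_*$ is a viscosity supersolution on $\Omega$; (c) $u^*=u_*=u\in C^0(\overline{\Omega})$; (d) uniqueness.

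Step (a) is the standard fact that the upper envelope of viscosity subsolutions is again a subsolution. Given an admissible test function $\psi$ touching $u^*$ from above at $\bar x\in\Omega$, upper-semicontinuity produces sequences $v_k\in \mathcal{F}$ and $x_k\to\bar x$ with $v_k(x_k)\to u^*(\bar x)$; a small quadratic perturbation of $\psi$ touches each $v_k$ from above at a nearby point $\tilde x_k\to\bar x$, so $f(\lambda(D^2\psi(\tilde x_k)))\ge 1+o(1)$, and passing to the limit $k\to\infty$ gives the claim.

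Step (b) is the heart of the proof and the anticipated main obstacle. Arguing by contradiction, suppose $u_*$ fails the supersolution test at $x_0\in\Omega$: an admissible $\psi$ with $\psi(x_0)=u_*(x_0)$ and $\psi\le u_*$ on $\Omega$ satisfies $f(\lambda(D^2\psi(x_0)))>1$. Continuity of $f$ permits a quadratic bump $\psi_\eta := \psi + \eta(\rho^2-|x-x_0|^2)$, for $\eta>0$ small, which remains a strict classical subsolution on a ball $B_\rho(x_0)\Subset\Omega$ and agrees with $\psi$ on $\partial B_\rho$. The spliced function
\[
\tilde v := \begin{cases} \max\{u,\psi_\eta\}, & \text{in } \overline{B_\rho(x_0)},\\ u, & \text{in } \overline{\Omega}\setminus \overline{B_\rho(x_0)}, \end{cases}
\]
is continuous (since $\psi_\eta\le u$ on $\partial B_\rho$ for $\eta$ small), is a viscosity subsolution (the pointwise maximum of two subsolutions is a subsolution), and satisfies $\tilde v\le \bar u$ via a localized comparison of the strict subsolution $\psi_\eta$ against the supersolution $\bar u$ on $B_\rho$ (using $\psi_\eta\le\bar u$ on $\partial B_\rho$); it stays equal to $\varphi$ on $\partial\Omega$ because the perturbation is compactly supported inside $\Omega$. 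Hence $\tilde v\in\mathcal{F}$. Selecting $y_k\to x_0$ with $u(y_k)\to u_*(x_0)=\psi(x_0)<\psi_\eta(x_0)$ produces $\tilde v(y_k)>u(y_k)$ for large $k$, contradicting $u=\sup\mathcal{F}$.

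Steps (c) and (d) follow from the comparison principle for $f(\lambda(D^2 u))=1$ under \eqref{eq:increase}: by (a) and (b), $u^*$ is a subsolution and $u_*$ a supersolution with $u^*\ge u_*$; matching boundary values (squeezing $u_*\ge\underline u=\varphi$ and identifying $u^*=\varphi$ on $\partial\Omega$ by a local barrier argument when $\bar u\neq\varphi$ there) together with the comparison principle force $u^*\le u_*$, so $u=u^*=u_*$ is continuous and is a viscosity solution of the boundary value problem. Uniqueness follows by one more application of comparison to any two solutions. I expect the principal technical difficulty to lie in step (b), specifically the localized comparison needed to justify $\tilde v\le\bar u$ on $B_\rho$, together with the careful boundary bookkeeping in step (c); the remaining arguments are standard manipulations in the viscosity framework.
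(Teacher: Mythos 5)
The paper itself offers no proof of this lemma --- it is quoted from the literature, with the proof deferred to \cite{Li-Bao-2014} (following \cite{Ishii1989,Ishii1992}) --- and your outline follows exactly that standard Ishii--Perron route, so the overall strategy is the right one. But two steps of your sketch have genuine gaps. First, in step (b) the contradiction requires the spliced competitor $\tilde v$ to belong to the family $\mathcal{F}$, whose members are by definition continuous on $\overline{\Omega}$; at this stage $u=\sup\mathcal{F}$ is only known to be lower semicontinuous (a supremum of continuous functions), so $\tilde v=\max\{u,\psi_\eta\}$ need not be continuous and need not lie in $\mathcal{F}$, which makes ``contradicting $u=\sup\mathcal F$'' circular: continuity of $u$ is only obtained in step (c), which in turn relies on step (b). In addition, with the specific bump $\psi_\eta=\psi+\eta(\rho^2-|x-x_0|^2)$ you only get $\psi_\eta\le u$ on the sphere $\partial B_\rho$ itself (where the bump vanishes), not on an interior annulus, so the upper semicontinuity and the subsolution property of the splice across $\partial B_\rho$ are not justified as written. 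The standard cure is to bump with $\psi+\delta_0-\gamma|x-x_0|^2$, $\delta_0<\gamma\rho^2/4$, so that the perturbation lies strictly below $u_*\le u$ on $\{\rho/2\le|x-x_0|\le\rho\}$, and to run the Perron construction over upper semicontinuous subsolutions, recovering continuity and membership in $\mathcal F$ only at the end via comparison.

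Second, in step (c) you assert $u^*=\varphi$ on $\partial\Omega$ ``by a local barrier argument,'' but no barrier is supplied, and none follows from the hypotheses alone: $\bar u$ need not equal $\varphi$ on $\partial\Omega$ (and in the paper's application it does not). The workable route is to note that, since $\overline{\Gamma}\subset\{\lambda:\sum_i\lambda_i\ge 0\}$, every viscosity subsolution of $f(\lambda(D^2v))=1$ is subharmonic, and then to dominate all $v\in\mathcal F$ near $\xi\in\partial\Omega$ by the harmonic function on $\Omega\cap B_r(\xi)$ with data $\varphi$ on $\partial\Omega\cap B_r(\xi)$ and $\sup\bar u$ on $\Omega\cap\partial B_r(\xi)$; this yields $\limsup_{x\to\xi}u(x)\le\varphi(\xi)$, but only at boundary points regular for the Laplacian, so some boundary regularity (automatic in the application, where $\partial D$ is smooth and strictly convex) must be invoked explicitly. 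Finally, Lemma \ref{lem:compar} for unbounded $\Omega$ requires control at infinity, so your closing claim that uniqueness follows ``by comparing any two solutions'' should be restricted to solutions trapped between $\underline{u}$ and $\bar{u}$ (which is how the lemma is actually used in Subsection \ref{subsec:3-1}); without such a constraint the exterior Dirichlet problem has many solutions.
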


\begin{Lem}\label{lem:compar}
Assume \eqref{eq:increase}. Let $\Omega$ be a domain in $\mathbb{R}^n$. If $u\in\mathrm{USC}(\overline{\Omega})$, $v\in \mathrm{LSC}(\overline{\Omega})$ are respectively the viscosity subsolution and supersolution to $f(\lambda(D^2u))=1$ and $u\leq v$ on $\partial\Omega$, then $u\leq v$ in $\Omega$.
\end{Lem}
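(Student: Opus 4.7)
The plan is to proceed by contradiction using the classical variable-doubling (penalization) technique coupled with the Crandall--Ishii theorem of sums, as developed in \cite{Ishii1989, Ishii1992, Jensen1988} and adapted to Hessian-type operators in \cite{Urbas1990, Li-Bao-2014}. Assume for contradiction that $M := \sup_{\Omega}(u-v) > 0$. The upper semicontinuity of $u - v$ together with the boundary bound $u \leq v$ on $\partial \Omega$ will localize a maximum to some interior point $\hat x \in \Omega$; for unbounded $\Omega$, I would first pass to a bounded subdomain on which $u \leq v + \eta$ along the relative boundary, an extra decay information that is automatic in the eventual application of the lemma inside the Perron scheme for Theorem \ref{thm:main} via the prescribed asymptotics \eqref{eq:asym-s}.

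For $\varepsilon > 0$, I would introduce the doubled functional
\[
\Phi_\varepsilon(x,y) := u(x) - v(y) - \frac{|x - y|^2}{2\varepsilon},
\]
with maximizer $(x_\varepsilon, y_\varepsilon)$ in a compact neighborhood of $\hat x$, and exploit the standard convergences $|x_\varepsilon - y_\varepsilon|^2/\varepsilon \to 0$ and $x_\varepsilon, y_\varepsilon \to \hat x$. The Crandall--Ishii lemma then produces symmetric matrices $X_\varepsilon, Y_\varepsilon$ in the closures of the second-order superjet $J^{2,+} u(x_\varepsilon)$ and subjet $J^{2,-} v(y_\varepsilon)$ respectively, both paired with the common gradient $p_\varepsilon = (x_\varepsilon - y_\varepsilon)/\varepsilon$, together with a $2n \times 2n$ matrix inequality which, tested against vectors of the form $(\xi, \xi)$, collapses to $X_\varepsilon \leq Y_\varepsilon$. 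Consequently $\lambda_i(X_\varepsilon) \leq \lambda_i(Y_\varepsilon)$ for every $i$, and once the sub/supersolution inequalities $f(\lambda(X_\varepsilon)) \geq 1$ and $f(\lambda(Y_\varepsilon)) \leq 1$ are available, the strict monotonicity \eqref{eq:increase} of $f$ forces $1 \leq f(\lambda(X_\varepsilon)) \leq f(\lambda(Y_\varepsilon)) \leq 1$, i.e.\ equality throughout. To upgrade this to a genuine contradiction, I would introduce a strict-subsolution perturbation in advance, replacing $u$ by $u - \tau \rho$ where $\rho$ is smooth, vanishes on $\partial \Omega$, and satisfies $D^2 \rho \succeq I$; rerunning the doubling argument then produces Hessians $X_\varepsilon^\tau$ with $\lambda_i(X_\varepsilon^\tau) \leq \lambda_i(X_\varepsilon) - \tau$, and the strict monotonicity yields $f(\lambda(X_\varepsilon^\tau)) < f(\lambda(Y_\varepsilon))$, contradicting the sub/supersolution bounds, after which I let $\tau \to 0$.

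The main obstacle, which distinguishes this comparison principle from the textbook one, is the \emph{admissibility} clause baked into the paper's definition of viscosity sub/supersolution: the inequalities on $f$ apply only to test-function Hessians whose eigenvalues lie in $\overline{\Gamma}$, and the matrices furnished by the Crandall--Ishii lemma are not a priori in this cone. I would handle this by a further perturbation, replacing $Y_\varepsilon$ with $Y_\varepsilon + \sigma I$, which for $\sigma$ sufficiently large lies in $\overline{\Gamma}$ since the positive orthant is contained in $\Gamma$; applying the supersolution inequality to this perturbed Hessian and then sending $\sigma \to 0$ recovers the desired bound. One must still verify that the ordering $X_\varepsilon \leq Y_\varepsilon + \sigma I$ remains compatible with the admissibility required on the subsolution side, and that the strict-subsolution correction $u - \tau \rho$ preserves the cone condition; these are the delicate technical points treated in the cited references \cite{Ishii1989, Ishii1992, Jensen1988, Urbas1990} and specifically in \cite{Li-Bao-2014}, to which I would defer for the routine verifications.
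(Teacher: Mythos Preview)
The paper does not prove this lemma itself; it simply states that the result is an adaptation from \cite{Ishii1989,Ishii1992,Jensen1988,Urbas1990} and refers the reader to \cite{Li-Bao-2014} for the specific proof. Your variable-doubling/Crandall--Ishii sketch is precisely the standard argument carried out in those references, so your proposal matches what the paper defers to.
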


Besides, we also need the following  existence result of barrier functions on the boundary, which has proved in \cite{Caffarelli-Li-2003,Bao-Li-Li-2014}.  

\begin{Lem}\label{lem:w-xi}
Let $D$ be a bounded strictly convex domain of $\mathbb{R}^n, n\geq3$, $\partial D\in C^2$, $\varphi\in C^2(\partial D)$ and let $A$ be an invertible and symmetric matrix. There exists some constant $C$, depending only on $n, \|\varphi\|_{C^2(\partial D)}$, the upper bound of $A$, the diameter and the convexity of $D$, and the $C^2$ norm of $\partial D$, such that for every $\xi\in\partial D$, there exists $\bar{x}(\xi)\in\mathbb{R}^n$ satisfying $$|\bar{x}(\xi)|\leq C\quad\text{and}\quad w_\xi<\varphi\quad\text{on}\quad \partial D\setminus\{\xi\},$$ where $$w_\xi(x)=\varphi(\xi)+\frac12\left[(x-\bar{x}(\xi))^TA(x-\bar{x}(\xi))-(\xi-\bar{x}(\xi))^TA(\xi-\bar{x}(\xi))\right]$$
for $x\in\mathbb{R}^n$.
\end{Lem}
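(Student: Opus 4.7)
The plan is to pick $\bar{x}(\xi)$ so that the linear part of $\varphi - w_\xi$ in the Taylor expansion at $\xi$ vanishes, while an additional normal push into $\overline{D}$ forces a definite negative sign on the quadratic remainder. To set up, I would first extend $\varphi$ to some $\tilde\varphi\in C^2(\mathbb{R}^n)$ with $\|\tilde\varphi\|_{C^2(\mathbb{R}^n)}\leq C\|\varphi\|_{C^2(\partial D)}$, so that
\[
\tilde\varphi(x)-\tilde\varphi(\xi)=\nabla\tilde\varphi(\xi)\cdot(x-\xi)+R(x,\xi),\qquad |R(x,\xi)|\leq \tfrac12\|\tilde\varphi\|_{C^2}|x-\xi|^2.
\]
Combining the strict convexity and $C^2$ regularity of $\partial D$ with compactness, I would also establish a uniform quadratic separation
\[
(x-\xi)\cdot\nu(\xi)\leq -\kappa_0|x-\xi|^2\qquad\text{for all }\xi,x\in\partial D,
\]
for some $\kappa_0>0$ depending only on $\mathrm{diam}(D)$, the convexity of $D$, and the $C^2$ norm of $\partial D$, where $\nu(\xi)$ is the outer unit normal at $\xi$.

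With these two ingredients in hand I would define
\[
\bar{x}(\xi):=\xi-A^{-1}\bigl(\nabla\tilde\varphi(\xi)+T\,\nu(\xi)\bigr),
\]
with $T>0$ to be fixed uniformly in $\xi$. The bound $|\bar{x}(\xi)|\leq C$ then follows at once from the estimates on $\mathrm{diam}(D)$, $\|\nabla\tilde\varphi\|_\infty$, $\|A^{-1}\|$, and $T$. Expanding $(x-\bar{x})=(x-\xi)+(\xi-\bar{x})$ yields the identity
\[
w_\xi(x)-\varphi(x)=-\bigl[\varphi(x)-\varphi(\xi)\bigr]+\tfrac12(x-\xi)^TA(x-\xi)+(x-\xi)^TA(\xi-\bar{x}),
\]
and inserting $A(\xi-\bar{x})=\nabla\tilde\varphi(\xi)+T\nu(\xi)$ together with the Taylor expansion cancels the linear term in $(x-\xi)$, giving
\[
w_\xi(x)-\varphi(x)=-R(x,\xi)+\tfrac12(x-\xi)^T A(x-\xi)+T\,(x-\xi)\cdot\nu(\xi).
\]
The two pointwise bounds then combine into
\[
w_\xi(x)-\varphi(x)\leq\Bigl(\tfrac12\|\tilde\varphi\|_{C^2}+\tfrac12\|A\|-T\kappa_0\Bigr)|x-\xi|^2\quad\text{on }\partial D,
\]
which is strictly negative on $\partial D\setminus\{\xi\}$ as soon as $T>(\|\tilde\varphi\|_{C^2}+\|A\|)/(2\kappa_0)$; such a $T$ depends only on the quantities allowed by the statement of the lemma.

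The main technical point in this plan is the uniform quadratic convexity inequality with a single $\kappa_0>0$ valid for all pairs $\xi,x\in\partial D$. Locally, writing $\partial D$ near $\xi$ as a graph over its tangent hyperplane, this reduces to a uniform lower bound on the second fundamental form of $\partial D$ coming from strict convexity and controlled by the $C^2$ norm of $\partial D$. Upgrading the local bound to a global one, uniform in $\xi$, is then a compactness argument that additionally uses the diameter of $D$. Once this quantitative convexity is secured, the remainder of the construction is a direct algebraic verification, and the cancellation of the linear term is the mechanism that makes the final coefficient purely quadratic.
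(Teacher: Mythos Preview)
Your construction is correct and is essentially the standard argument from the references the paper cites; the paper itself gives no proof of this lemma, simply pointing to \cite{Caffarelli-Li-2003,Bao-Li-Li-2014}. One small remark: your bound on $|\bar{x}(\xi)|$ inevitably brings in $\|A^{-1}\|$, whereas the lemma as stated lists only ``the upper bound of $A$'' among the dependencies---this is a slight imprecision in the statement (in the application $A$ is a fixed positive definite matrix, so both bounds on its eigenvalues are available), not a defect in your argument.
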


We now start to prove Theorem \ref{thm:main}, provided that $A\in\mathcal{A}$ is of the form $$A=\text{diag}(a_1,a_2,\cdots,a_n)$$ 
with $0<a_1\leq a_2\leq\cdots\leq a_n$, and that $b=0$.

\begin{proof}[Proof of Theorem \ref{thm:main}]
 For $s>0$, let $$E(s):=\left\{x\in\mathbb{R}^n~|~\frac12x^TAx<s\right\}.$$ Without the loss of generality, we assume $E(1)\subset D\subset E(\bar{s})$, where $\bar{s}$ is defined in Proposition \ref{pro:sub-f}. We first construct a viscosity subsolution $\underline{u}$ of \eqref{eq:pro} with $\underline{u}=\varphi$ on $\partial D$.

Set for $c_2\geq1$ and $s>1$,
\begin{equation*}
\omega_{c_2}(x)=u_{c_1,c_2,\delta}(s)-u_{c_1,c_2,\delta}(\bar{s})+\beta,
\end{equation*}
where $u_{c_1,c_2,\delta}$ is given in Proposition \ref{pro:sub-f}, and set 
$$\beta:=\min\{w_{\xi}(x)~|~\xi\in\partial D, x\in \overline{E(\bar{s})}\setminus D\}.$$
Here $w_\xi$ is introduced in Lemma \ref{lem:w-xi}. We have known, by Proposition \ref{pro:sub-f}, that $\omega_{c_2}$ is a smooth subsolution of \eqref{eq:pro} when $s>\bar{s}$ and there holds
\begin{equation}\label{eq:asym-p-u}
\omega_{c_2}(x)=\frac{1}{2}x^TAx+\mu_{\bar{s}}(c_2)+O\left(|x|^{2-2\alpha_\delta}\right),\quad \text{as } |x|\to\infty,
\end{equation}
where $$\mu_{\bar{s}}(c_2):=\int_{\bar{s}}^{\infty}(w_{c_2,\delta}(t)-1)\,dt+\beta-\bar{s},
$$ here $w_{c_2,\delta}(t)$ is the solution of \eqref{eq:ode-w-1}.
Due to the monotonicity of $u_{c_1,c_2,\delta}(s)$, 
\begin{equation}\label{eq:omega-beta}
\omega_{c_2}\leq\beta,\quad\mathrm{in}\ E(\bar{s})\setminus\overline{D}, \forall c_2>1.
\end{equation}
Define then
$$\hat{b}:=\max\{w_{\xi}(x)~|~\xi\in\partial D, x\in \overline{E(\bar{s})}\setminus D\}.$$
We next take a constant $c_*$ which will be fixed later. If $c_*>\hat{b}$, then $$\mu_{\bar{s}}(1)=\beta-\bar{s}<\beta\leq \hat{b}<c_*.$$
Also, by Lemma \ref{lem:fgw-2}, $\mu_{\bar{s}}(c_2)$ is strictly increasing and $\lim_{c_2\to\infty}\mu_{\bar{s}}(c_2)=\infty$. Thus, for every $c>c_*$, there exists a unique $\alpha(c)>1$ such that 
\begin{equation}\label{eq:mu-alpha-c}
\mu_{\bar{s}}(\alpha(c))=c.
\end{equation}

Set $$\underline{w}(x)=\max\{w_{\xi}(x)~|~\xi \in \partial D\}.$$
It is clear from Lemma \ref{lem:w-xi} that $\underline{w}$ is a locally Lipschitz function in $\mathbb{R}^n\setminus D$, and $\underline{w}=\varphi$ on $\partial D$. Since $w_\xi$ is a smooth convex solution of \eqref{eq:pro}, $\underline{w}$ is a viscosity subsolution of equation \eqref{eq:pro} in $\mathbb{R}^n\setminus\overline{D}$. By \eqref{eq:def-u-d}, we fix a number $\hat{s}>\bar{s}$, and then choose another number $\hat{c}>0$ such that $$\min_{\partial E(\hat{s})}\omega_{\hat{c}}>\max_{\partial E(\hat{s})}\underline{w}.$$

Fix $c_*$ to satisfy $c_*\geq \mu_{\bar{s}}(\hat{c})$ and $c_*>\hat{b}$. Then by \eqref{eq:mu-alpha-c}, for $c\geq c_*$, we have $\alpha(c)=\mu_{\bar{s}}^{-1}(c)\geq\mu_{\bar{s}}^{-1}(c_*)\geq\hat{c}$, and thereby 
\begin{equation}\label{eq:omega-c-w}
\omega_{\alpha(c)}\geq \omega_{\hat{c}}>\underline{w},\quad \mathrm{on}\ \partial E(\hat{s}).
\end{equation}
Since \eqref{eq:omega-beta}, we have 
\begin{equation}\label{eq:omega-beta-under-w}
\omega_{\alpha(c)}\leq \beta\leq \underline{w}, \quad \mathrm{in}\ E(\bar{s})\setminus\overline{D}.
\end{equation}

We define here, for $c>c_*$,
\begin{equation*}
\underline{u}(x)=
\begin{cases}
\underline{w}(x)\quad &x\in E(\bar{s})\backslash D,\\
\max \{\omega_{\alpha(c)}(x),\underline{w}(x)\},\quad &x\in E(\hat{s})\backslash E(\bar{s}),\\
\omega_{\alpha(c)}(x),&x\in\mathbb{R}^n\backslash E(\hat{s}).
\end{cases}
\end{equation*}
Clearly,
$$\underline{u}=\underline{w}=\varphi,\quad\mathrm{on}\ \partial D.$$
And it follows from \eqref{eq:omega-c-w} that $\underline{u}=\omega_{\alpha(c)}$ in a neighborhood of $\partial E(\hat{s})$. Therefore $\underline{u}$ is locally Lipschitz in $\mathbb{R}^n\setminus D$. Since both $\omega_{\alpha(c)}$ and $\underline{w}$ are viscosity subsolutions of \eqref{eq:pro} in $\mathbb{R}^n\setminus\overline{D}$, so is $\underline{u}$.

In order to finish the proof by virtue of Lemma \ref{lem:perron-m}, we continue to find below a viscosity supersolution $\bar{u}$ of \eqref{eq:pro} with $\underline{u}\leq\bar{u}$ on $\mathbb{R}^n\setminus D$.

For $c>c_*$, define $$\bar{u}(x):=\frac12x^TAx+c,$$ which is a smooth convex solution of \eqref{eq:pro}. By  \eqref{eq:omega-beta-under-w}, we have
$$\omega_{\alpha(c)}\leq\beta\leq\hat{b}<c^*<\bar{u}, \quad\mathrm{on}\ \partial{D}.$$
Also, by \eqref{eq:asym-p-u} and \eqref{eq:mu-alpha-c}, 
$$\lim_{|x|\to\infty}\omega_{\alpha(c)}-\bar{u}=0.$$
Hence, applying Lemma \ref{lem:compar}, we deduce 
\begin{equation*}
\omega_{\alpha(c)}\leq \bar{u}, \quad\mathrm{on}\ \mathbb{R}^n\setminus D.
\end{equation*}
Then from \eqref{eq:omega-c-w} and the above, one has, for $c>c^*$,
$$w_\xi\leq\bar{u}, \quad\mathrm{on}\ \partial(E(\hat{s})\setminus D),\forall\xi\in\partial D.$$
Using Lemma \ref{lem:compar} again, we obtain
$$w_\xi\leq\bar{u},\quad\mathrm{in}\ E(\hat{s})\setminus\overline{D},\forall\xi\in\partial D,$$
and $$\underline{w}\leq\bar{u},\quad\mathrm{in}\ E(\hat{s})\setminus\overline{D}.$$
Therefore,
$$\underline{u}\leq\bar{u},\quad\mathrm{in}\ \mathbb{R}^n\setminus D. $$

Now, we conclude our proof in terms of $\underline{u}$ and $\bar{u}$. For any $c>c^*$, let $\mathcal{S}_\alpha$ denote the set containing such $v\in C^0(\mathbb{R}^n\setminus D)$ that is the viscosity subsolution of \eqref{eq:pro} in $\mathbb{R}^n\setminus\overline{D}$ satisfying 
\begin{equation*}
v=\varphi\quad\mathrm{on}\ \partial D
\end{equation*}
and 
\begin{equation*}
\underline{u}\leq v\leq \bar{u}\quad\mathrm{in} \ \mathbb{R}^n\setminus D.
\end{equation*}
Apparently, $\underline{u}\in\mathcal{S}_\alpha$. Let
$$u(x):=\sup\{v(x)|v\in\mathcal{S}_\alpha\}, \ x\in \mathbb{R}^n\setminus D.$$
Then Lemma \ref{lem:perron-m} shows that $u(x)\in C^0(\mathbb{R}^n\setminus D)$ is the unique viscosity solution of problem \eqref{eq:pro}. Moreover, since
\begin{equation*}
u(x)\geq\underline{u}=\omega_{\alpha(c)}(x)=\frac12x^TAx+c+O(|x|^{2-2\alpha_\delta}),\quad\mathrm{as}\quad x\to\infty,
\end{equation*}
and 
$$u(x)\leq\bar{u}(x)=\frac12x^TAx+c,$$
we deduce that \eqref{eq:asym-s} holds, thus completing the proof.
\end{proof}

\subsection{Some applications of Theorem \ref{thm:main}}\label{subsec:3-2}
Applying Theorem \ref{thm:main}, we can present some new results of the exterior Dirichlet problem for $k$-Hessian equations, Hessian quotient equations, and the special Lagrangian equations. This is due to the prescribed setting about the asymptotics \eqref{eq:asym-s} is different from the existing ones set separately in \cite{Bao-Li-Li-2014,Li-Li-Zhao-2019,Li2019} and even in \cite{Caffarelli-Li-2003} for the Monge-Amp\`ere equation. In fact, the difference lies in the order of tending to a quadratic polynomial at infinity, and ours, generally,  is a little smaller. We shall give corresponding examples of elements in $\mathcal{A}$ for these particular equations to make a comparison with previous related results.


For simplicity, we write $\lambda:=(\lambda_1,\lambda_2,\cdots,\lambda_n)$, omitting its relevance about some real $n\times n$ symmetric matrix $A$, to denote $\lambda(A)$.

\begin{Ex}[$k$-Hessian equations]\label{rk:sigma}
 When $f$ is \eqref{eq:k-sigma}, the $k$-th elementary symmetric function $\sigma_k(\lambda)$, $1\leq k\leq n$, we define
\begin{equation*}
\Gamma=\Gamma_k:=\{x\in\mathbb{R}^n|\sigma_j(x)>0, 1\leq j\leq k\}.
\end{equation*}
Then take $\mathcal{A}$ correspondingly as 
$$\mathcal{A}_k:=\left\{A\in S^+(n):\sigma_k(\lambda)=1,\  \frac{k}{2\lambda_n\sigma_{k-1;1}(\lambda)}>1\right\}$$
where $\sigma_{k-1;i}(\lambda)=\sigma_{k-1}(\lambda)\big{|}_{\lambda_i=0}$.
For example, let 
\begin{equation*}
H_\varepsilon=\mathrm{diag}\left(\sqrt{\frac{1}{3}}-\frac{2\varepsilon}{2+\sqrt{3}\varepsilon},\sqrt{\frac{1}{3}},\sqrt{\frac{1}{3}}+\varepsilon\right).
\end{equation*}
One can compute directly that $H_{\varepsilon}\in\mathcal{A}_{k}$ for any $\varepsilon\in[0,\frac{-3\sqrt{3}+\sqrt{39}}{6})$, $k=2$ and $n=3$.

Theorem \ref{thm:main} asserts that for each $A\in\mathcal{A}_{k}$, there is a viscosity solution to the exterior problem:
\begin{equation}\label{eq:pro-sigma}
\left\{
\begin{array}{ll}
\sigma_k(\lambda(D^2u))=1, & \mathrm{in}\ \mathbb{R}^n\setminus \overline{D},\\
u=\varphi, & \mathrm{on}\ \partial D,\\
\end{array}
\right.
\end{equation}
and
\begin{equation}\label{eq:asym-sigma}
\limsup_{|x|\to\infty}\left(|x|^{\frac{2k}{(2\lambda_n+\delta)\sigma_{k-1;1}(\lambda)}-2}\Big{|}u(x)-(\frac12x^TAx+b\cdot x+c)\Big{|}\right)<\infty,
\end{equation}
if $0<\delta<\frac{k-2\lambda_n\sigma_{k-1;1}(\lambda)}{\sigma_{k-1;1}(\lambda)}$. By contrast, when $2\leq k\leq n$, Bao-Li-Li \cite{Bao-Li-Li-2014} dealed with \eqref{eq:pro-sigma} and \eqref{eq:asym-sigma} where 
the growth function $|x|^{\frac{2k}{(2\lambda_n+\delta)\sigma_{k-1;1}(\lambda)}-2}$ of \eqref{eq:asym-sigma} is replaced by $|x|^{\frac{k}{\lambda_n\sigma_{k-1;n}(\lambda)}-2}$. The powers of these two growth functions are different. An illustration by exemplifying $H_\varepsilon$ $(\varepsilon=0.0874)$ and setting $\delta=0.1$ is: 
\begin{gather*}
\frac{4}{(2\lambda_3(H_\varepsilon)+0.1)\sigma_{1;1}(\lambda(H_\varepsilon))}-2\approx0.2528,\\
\frac{2}{\lambda_3(H_{\varepsilon})\sigma_{1;3}(\lambda(H_\varepsilon))}-2\approx0.8024.
\end{gather*}

Especially, when $k=n$, \eqref{eq:pro-sigma} reduces to the the Monge-Amp\`ere equation:
 \begin{equation}\label{eq:pro-Monge}
\left\{
\begin{array}{ll}
\det(D^2u)=1, & \mathrm{in}\ \mathbb{R}^n\setminus \overline{D},\\
u=\varphi, & \mathrm{on}\ \partial D.\\
\end{array}
\right.
\end{equation}
Our theorem proves the solvability of \eqref{eq:pro-Monge} with
\begin{equation}\label{eq:asym-Monge}
\limsup_{|x|\to\infty}\left(|x|^{\frac{2n\lambda_1}{2\lambda_n+\delta}-2}\Big{|}u(x)-(\frac12x^TAx+b\cdot x+c)\Big{|}\right)<\infty,
\end{equation}
for $0<\delta<n\lambda_1-2\lambda_n$ and the $A$ that is in
$$\left\{A\in S^+(n):\det(\lambda)=1,\  \frac{n\lambda_1}{2\lambda_n}>1\right\}:=\mathcal{A}_{m}.$$
While Caffarelli-Li \cite{Caffarelli-Li-2003} treated the problem \eqref{eq:pro-Monge} with \eqref{eq:C-Li} instead of \eqref{eq:asym-Monge}. Below are examples:
$$M_\varepsilon=\mathrm{diag}\left(1-\frac{\varepsilon}{1+\varepsilon},1,1+\varepsilon\right).$$
$M_\varepsilon\in\mathcal{A}_m$ for every $0\leq \varepsilon<\frac{\sqrt{6}-2}{2}$, $n=3$. Letting $\varepsilon=0.1$ and $\delta=0.1$, we get $n-2=1$ but
\begin{gather*}
\frac{6\lambda_1(M_{\varepsilon})}{2\lambda_3(M_{\varepsilon})+0.1}-2\approx0.3715.
\end{gather*}
\end{Ex}

\begin{Ex}[Hessian quotient equations]\label{rk:quo}
When $f$ is \eqref{eq:quotient}, the quotient of elementary symmetric function $\frac{\sigma_k}{\sigma_l}(\lambda)$, $1\leq l<k\leq n$, we still set $\Gamma=\Gamma_k$. The $\mathcal{A}$ will become
$$\mathcal{A}_q:=\left\{A\in S^+(n):\frac{\sigma_k}{\sigma_l}(\lambda)=1,\  \frac{(k-l)\sigma_l(\lambda)}{2\lambda_nH(k,l)(\lambda)}>1\right\},$$
where $$H(k,l)(
\lambda):=\max_{1\leq i\leq n}(\sigma_{k-1;i}-\sigma_{l-1;i})(\lambda).$$
Examples are
\begin{equation*}
Q_\epsilon=\mathrm{diag}\left(3-\frac{2\epsilon}{2\epsilon+3},3,3+\epsilon\right).
\end{equation*}
One can verify that $Q_\epsilon\in\mathcal{A}_q$ for $0\leq \epsilon<\frac{3\sqrt{3}-3}{4}$, $n=3$, $k=3$ and $l=2$.

Then Theorem \ref{thm:main} shows that for each $A\in\mathcal{A}_q$, there is a viscosity solution to the exterior problem:
\begin{equation}\label{eq:pro-quo}
\left\{
\begin{array}{ll}
\frac{\sigma_k}{\sigma_l}(\lambda(D^2u))=1, & \mathrm{in}\ \mathbb{R}^n\setminus \overline{D},\\
u=\varphi, & \mathrm{on}\ \partial D,\\
\end{array}
\right.
\end{equation}
and
\begin{equation}\label{eq:asym-quo}
\limsup_{|x|\to\infty}\left(|x|^{2\alpha_\delta-2}\Big{|}u(x)-(\frac12x^TAx+b\cdot x+c)\Big{|}\right)<\infty,
\end{equation}
where $\alpha_\delta=\frac{(k-l)\sigma_l(\lambda)}{(2\lambda_n+\delta)H(k,l)(\lambda)}$ for those $\delta>0$ such that $\alpha_\delta>1$. In previous work \cite{Li-Li-Zhao-2019} (see also \cite{Li-Li-2018}), the authors solved \eqref{eq:pro-quo} with \eqref{eq:asym-quo} under the growth assumption $|x|^{\theta(k,l)}$, rather than $|x|^{2\alpha_\delta-2}$. Here $$\theta(k,l):=\frac{(k-l)\sigma_l(\lambda)}{\lambda_n\sigma_{k-1;n}-\lambda_1\sigma_{l-1;1}}-2.$$
Exemplifying $Q_\epsilon$ $(\epsilon=0.1)$ and setting $\delta=0.1$, we can observe their difference:
\begin{gather*}
\frac{2\sigma_2(\lambda(Q_\epsilon))}{(2\lambda_3+0.1)(\sigma_{2;1}-\sigma_{1;1})(\lambda(Q_\epsilon))}-2\approx0.7102\\
\frac{\sigma_2(\lambda(Q_\epsilon))}{\lambda_3\sigma_{2;3}(\lambda(Q_\epsilon))-\lambda_1\sigma_{1;1}(\lambda(Q_\epsilon))}-2\approx0.8956.
\end{gather*}
\end{Ex}

\begin{Ex}[The special Lagrangian equations]\label{rk:Lag}
When $f$ involves \eqref{eq:Lag}, the special Lagrangian operator $$\frac{1}{\Theta}\sum_{i=1}^n\arctan\lambda_i,$$ we set $(n-1)\pi/2\leq\Theta<n\pi/2$ and $\Gamma=\Gamma_n$ $(\Gamma_{n-1})$ if $n$ is odd (even). The $\mathcal{A}$ will become
$$\mathcal{A}_l:=\left\{A\in S^+(n):\frac{1}{\Theta}\sum_{i=1}^n\arctan\lambda_i=1,\  \frac{1+\lambda_1^2}{2\lambda_n}\sum_{i=1}^n\frac{\lambda_i}{1+\lambda_i^2}>1\right\}.$$
Its examples are
\begin{equation*}
L_\varepsilon=\mathrm{diag}\left(\tan\left(\frac{\pi}{3}-\varepsilon\right),\tan\left(\frac{\pi}{3}\right),\tan\left(\frac{\pi}{3}+\varepsilon\right)\right),
\end{equation*}
if $0\leq \varepsilon<0.071$, $n=3$ and $\Theta=\pi$.

Theorem \ref{thm:main} proves that for each $A\in\mathcal{A}_l$, there is a viscosity solution to the exterior problem:
\begin{equation}\label{eq:pro-Lag}
\left\{
\begin{array}{ll}
\frac{1}{\Theta}\sum_{i=1}^n\arctan(\lambda_i(D^2u))=1, & \mathrm{in}\ \mathbb{R}^n\setminus \overline{D},\\
u=\varphi, & \mathrm{on}\ \partial D,\\
\end{array}
\right.
\end{equation}
and
\begin{equation}\label{eq:asym-Lag}
\limsup_{|x|\to\infty}\left(|x|^{2\alpha_\delta-2}\Big{|}u(x)-(\frac12x^TAx+b\cdot x+c)\Big{|}\right)<\infty,
\end{equation}
where $\alpha_\delta=\frac{1+\lambda_1^2}{2\lambda_n+\delta}\sum_{i=1}^n\frac{\lambda_i}{1+\lambda_i^2}$ for those $\delta>0$ such that $\alpha_\delta>1$. In \cite{Li2019}, Li considered \eqref{eq:pro-Lag} with \eqref{eq:asym-Lag} where the growth function $|x|^{2\alpha_\delta-2}$ is replaced by $|x|^{m-2}$, $$m:=\frac{\sum_{k=0}^nkc_k(\Theta)\sigma_k(\lambda)}{\sum_{k=0}^n\xi_k(\Theta,\lambda)c_k(\Theta)\sigma_k(\lambda)}$$
where $\xi_{k}$ and $c_k$ are two quantities related to $\Theta$, whose precise definition can be found in \cite{Li2019}. 
For $L_\varepsilon$ $(\varepsilon=0.035)$ and $\delta=0.001$, $2\alpha_{0.001}-2\approx0.4537$, while $m-2\approx0.8856$.
\end{Ex}

In the above examples, we have declared the new results of Theorem \ref{thm:main} for problems \eqref{eq:pro-sigma}, \eqref{eq:pro-Monge}, \eqref{eq:pro-quo} and \eqref{eq:pro-Lag}, respectively. Focusing on the individual case $A=a^*I$, we remark that our theorem generalizes the existing results to be valid for a family of prescribed asymptotic settings, not just for \eqref{eq:C-Li}.

\begin{Rk}
When $f$ takes \eqref{eq:k-sigma}-\eqref{eq:Lag} respectively, it is easy to   derive that the corresponding $a^*$ defined by \eqref{eq:c} is $\left(C_n^k\right)^{-\frac{1}{k}}$ for \eqref{eq:k-sigma}, $\left(C_n^l/C_n^k\right)^{\frac{1}{k-l}}$ for \eqref{eq:quotient}, and $\tan(\Theta/n)$ for \eqref{eq:Lag}, where $C_n^i=\frac{n!}{(n-i)!i!}$ $(i=k,l)$ are binomial coefficients. If we select accordingly the $A$ in the prescribed behaviors, imposed in \cite{Bao-Li-Li-2014, Li-Li-Zhao-2019, Li-Li-2018, Li2019} for problems \eqref{eq:pro-sigma}, \eqref{eq:pro-quo} and \eqref{eq:pro-Lag}, to be the specific $a^*I$, then all of those behaviors will exactly be \eqref{eq:C-Li}, which corresponds to Theorem \ref{thm:main}'s \eqref{eq:asym-s} with $\delta=0$. This means, in the special situation $A=a^*I$, that our theorem not only covers the existence results asserted in \cite{Bao-Li-Li-2014, Li-Li-Zhao-2019, Li-Li-2018,Li2019, Caffarelli-Li-2003} but also extend them from $\delta=0$ to any $0\leq\delta<(n-2)a^*$.

\end{Rk}


\begin{thebibliography}{99}
\bibitem{Bao-Li-2013}
J.G. Bao, H.G. Li, The exterior Dirichlet problem for special Lagrangian equations in dimensions $n\leq4$, Nonlinear Anal. 89 (2013), 219-229.

\bibitem{Bao-Li-Li-2014}
J.G. Bao, H.G. Li, Y.Y. Li, On the exterior Dirichlet problem for Hessian equations, Trans. Amer. Math. Soc. 366 (2014), 6183-6200.

\bibitem{Bao-Li-Zhang-2015}
J.G. Bao, H.G. Li, L. Zhang, Monge-Amp\`{e}re equation on exterior domains. Calc. Var. Partial Differential Equations 52 (2015), 39-63. 
 
\bibitem{Caffarelli1995}
L. Caffarelli, Topics in PDEs: The Monge-Amp\`{e}re Equation, Graduate Course, Courant Institute, New York University, 1995. 

\bibitem{Caffarelli-Li-2003}
 L. Caffarelli, Y.Y. Li, An extension to a theorem of J\"{o}rgens, Calabi, and Pogorelov, Comm. Pure Appl. Math. 56 (2003), 549-583.
 
\bibitem{Caffarelli1984}
L. Caffarelli, L. Nirenberg, J. Spruck, The Dirichlet problem for nonlinear second-order elliptic equations. I. Monge-Amp\`{e}re equation, Comm. Pure Appl. Math. 37 (1984), 369-402.

\bibitem{Caffarelli1985}
 L. Caffarelli, L. Nirenberg, J. Spruck, The Dirichlet problem for nonlinear second-order elliptic equations. III. Functions of the eigenvalues of the Hessian, Acta Math. 155 (3-4) (1985), 261-301.
 
\bibitem{Calabi1958}
E. Calabi, Improper affine hyperspheres of convex type and a generalization of a theorem by K. J\"{o}rgens, Michigan Math. J. 5 (1958), 105-126.

\bibitem{Chen-Ma-Wei-2019}
C.Q. Chen, X.N. Ma, W. Wei, The Neumann problem of special Lagrangian equations with supercritical phase, J. Differential Equations 267 (2019), 5388-5409. 
\bibitem{Cheng-Yau-1986}
 S.Y. Cheng, S.T. Yau, Complete affine hypersurfaces, I. The completeness of affine metrics, Comm. Pure Appl. Math. 39 (1986), 839-866.
\bibitem{Ishii1992}
M.G. Crandall, H. Ishii, P.L. Lions, User's guide to viscosity solutions of second order partial differential equations, Bull. Amer. Math. Soc. 27 (1992), 1-67.


\bibitem{Guan1999}
B. Guan, The Dirichlet problem for Hessian equations on Riemannian manifolds, Calc. Var. Partial Differential Equations 8 (1999), 45-69.

\bibitem{Horn1985}
R.A. Horn, C.R. Johnson, Matrix analysis, Cambridge University Press, Cambridge, 1985.

\bibitem{Ishii1989}
H. Ishii, On uniqueness and existence of viscosity solutions of fully nonlinear second-order elliptic PDEs, Comm. Pure Appl. Math. 42 (1989), 15-45.
\bibitem{Jensen1988}
 R. Jensen, The maximum principle for viscosity solutions of fully nonlinear second order partial differential equations, Arch. Ration. Mech. Anal. 101 (1988), 1-27.
\bibitem{Jorgens1954}
K. J\"{o}rgens, \"{U}ber die L\"{o}sungen der Differentialgleichung $rt-s^2=1$ (German), Math. Ann. 127 (1954), 130-134.

\bibitem{Jost2001}
J. Jost, Y. Xin, Some aspects of the global geometry of entire space-like submanifolds, Results Math. 40 (2001), 233-245.
\bibitem{Li-Li-2018}
D.S. Li, Z.S. Li, On the exterior Dirichlet problem for Hessian quotient equations, J. Differential Equations 264 (2018), 6633-6662.

\bibitem{Li-Li-Yuan-2019}
D.S. Li, Z.S. Li, Y. Yuan, A Bernstein problem for special Lagrangian equations in exterior domains, Adv. Math. 361 (2020), 106927, 29 pp.
\bibitem{Li-Bao-2014}
H.G. Li, J.G. Bao, The exterior Dirichlet problem for fully nonlinear elliptic equations related to the eigenvalues of the Hessian. J. Differential Equations 256(2014), 2480-2501.
\bibitem{Li-Li-Zhao-2019}
H.G. Li, X.L. Li, S.Y. Zhao, Hessian quotient equations on exterior domains, arXiv:2004.06908, submitted, June, 2019.   
\bibitem{Li-Lu-2018}
Y.Y. Li, S.Y. Lu, Existence and nonexistence to exterior Dirichlet problem for Monge-Amp\`{e}re equation, Calc. Var. Partial Differential Equations  57 (2018), Paper No. 161, 17 pp.
\bibitem{Li2019}
Z.S. Li, On the exterior Dirichlet problem for special Lagrangian equations. Trans. Amer. Math. Soc. 372 (2019), 889-924.

\bibitem{Liberman2013}
G. Lieberman, Oblique boundary value problems for elliptic equations, World Scientiﬁc Publishing, 2013. 
\bibitem{Lions-Trudinger-Urbas-1986}
 P.L. Lions, N.S. Trudinger, J.I.E. Urbas, The Neumann problem for equations of Monge-Amp\`{e}re type, Comm. Pure Appl. Math. 39 (1986), 539-563. 
\bibitem{Ma-Qiu-2019}
 X.N. Ma, G.H. Qiu, The Neumann problem for Hessian equations, Comm. Math. Phys. 366 (2019), 1-28.
\bibitem{Pogorelov1972}
 A.V. Pogorelov, On the improper convex affine hyperspheres, Geom. Dedicata 1 (1972), 33-46.
\bibitem{Trudinger1987}
N.S. Trudinger, On degenerate fully nonlinear elliptic equations in balls, Bull. Austral. Math. Soc. 35 (1987), 299-307.
\bibitem{Trudinger1990}
N.S. Trudinger, The Dirichlet problem for the prescribed curvature equations, Arch. Ration. Mech. Anal. 111
(1990), 153-179.
\bibitem{Trudinger1995}
N.S. Trudinger, On the Dirichlet problem for Hessian equations, Acta Math. 175 (1995), 151-164.
\bibitem{Trudinger1997}
N.S. Trudinger, Weak solutions of Hessian equations, Comm. Partial Differential Equations 22 (1997), 1251-1261.
\bibitem{Urbas1990}
J.I.E. Urbas, On the existence of nonclassical solutions for two class of fully nonlinear elliptic equations, Indiana Univ. Math. J. 39 (1990), 355-382.
\bibitem{Urbas1995}
J.I.E. Urbas, Nonlinear oblique boundary value problems for Hessian equations in two dimensions, Ann. Inst. H. Poincar\'{e} Anal. Non Lin\'{e}aire 12 (1995), 507-575.
\end{thebibliography}
\end{document}